%%%%%%%%%%%%%%%%%%%%%%%%%%%%%
%%%%%%%%%%%%%%%%%%%%%%%%%%%%%%%%%%%%%%%%%%%%%%
\documentclass{amsart}
\usepackage{graphicx}
\usepackage{amssymb}
\usepackage{amsfonts}
\usepackage{hyperref}
\usepackage{mathrsfs}
\setlength{\footskip}{1cm}
\swapnumbers
\sloppy
% ----------------------------------------------------------------
\vfuzz2pt % Don't report over-full v-boxes if over-edge is small
\hfuzz2pt % Don't report over-full h-boxes if over-edge is small
% THEOREMS -------------------------------------------------------
\newtheorem{theorem}{Theorem}[section]

\newtheorem{proposition}[theorem]{Proposition}
\theoremstyle{definition}

\newtheorem{assumption}[theorem]{Assumption}
\newtheorem{remark}[theorem]{Remark}
\newtheorem{example}[theorem]{Example}

\numberwithin{equation}{section}
\theoremstyle{plain}

\numberwithin{equation}{section} %% Comment out for sequentially-numbered
\numberwithin{figure}{section} %% Comment out for sequentially-numbered
\theoremstyle{plain}
 %%Delete [thm] to re-start numbering
\theoremstyle{plain}
 %%Delete [thm] to re-start numbering
\theoremstyle{remark}
\newtheorem*{acknowledgement*}{Acknowledgement}
\theoremstyle{example}

%caligraphics

\newcommand{\cB}{{\mathcal B}}
\newcommand{\cC}{{\mathcal C}}

\newcommand{\cE}{{\mathcal E}}
\newcommand{\cF}{{\mathcal F}}

\newcommand{\cH}{{\mathcal H}}

\newcommand{\cL}{{\mathcal L}}

\newcommand{\cS}{{\mathcal S}}

\newcommand{\cU}{{\mathcal U}}

\newcommand{\cX}{{\mathcal X}}

%greeks
\newcommand{\te}{{\theta}}

\newcommand{\Om}{{\Omega}}
\newcommand{\om}{{\omega}}
\newcommand{\ve}{{\varepsilon}}
\newcommand{\del}{{\delta}}
\newcommand{\Del}{{\Delta}}
\newcommand{\gam}{{\gamma}}
\newcommand{\Gam}{{\Gamma}}

\newcommand{\sig}{{\sigma}}
\newcommand{\al}{{\alpha}}
\newcommand{\be}{{\beta}}
\newcommand{\ka}{{\kappa}}
\newcommand{\la}{{\lambda}}

%blackboard

\newcommand{\bbC}{{\mathbb C}}
\newcommand{\bbE}{{\mathbb E}}

\newcommand{\bbN}{{\mathbb N}}

\newcommand{\bbR}{{\mathbb R}}
\newcommand{\bbT}{{\mathbb T}}
\newcommand{\bbZ}{{\mathbb Z}}

%bolds

%bar

%special

\begin{document}
\title[]{A vector-valued almost sure invariance principle for time dependent non-uniformly expanding dynamical systems} %A sequential RPF theorem and its applications to inhomogeneous ...
 \vskip 0.1cm
 \author{Yeor Hafouta \\
\vskip 0.1cm
Department of Mathematics\\
The Ohio State University\\
}%
\address{
Department of Mathematics, The Ohio State University University}
\email{yeor.hafouta@mail.huji.ac.il, hafuta.1@osu.edu}%

\thanks{ }
\subjclass[2010]{37C30; 37C40; 37H99; 60F17}%
\keywords{limit theorems; Almost sure central limit theorem Perron-Frobenius theorem; thermodynamic formalism; sequential dynamical systems; time dependent dynamical systems; random dynamics; }%%
\dedicatory{  }
 \date{\today}

%\footnotetext[1]{}
\maketitle
\markboth{Y. Hafouta}{A vector-valued ASIP} %with non-expanding base maps}
\renewcommand{\theequation}{\arabic{section}.\arabic{equation}}
\pagenumbering{arabic}

\begin{abstract}\noindent
We prove a vector-valued almost sure invariance principle for some classes of time dependent non-uniformly distance expanding dynamical systems. The models we have in mind are certain sequential versions of the smooth non-uniformly distance expanding maps considered in \cite{castro} and \cite{Vara}, as well as certain types of sequences of covering maps. 
Our results rely on the theory of complex projective metrics which was developed in \cite{Rug}, together with the spectral methods of Gou\"ezel \cite{GO}. A big advantage in applying the theory of complex cones here is that it also yields additional  probabilistic limit theorems for random dynamical systems, as described at the last section of this paper. 
\end{abstract}

\section{Introduction}\label{sec1}
%(see \cite{Neg1}, \cite{Neg2}, \cite{GH} and \cite{HH}). 
%Mention Raughi, we obtain a local limit theorem

Probabilistic limit theorems for deterministic dynamical systems  is a well studied topic. One important generalization of such results (see, for insatance \cite{Kifer-1996} and \cite{Kifer-1998}) is to random dynamical systems in which the system evolves according to iterates of random transformations of the form $T_{\te^{n-1}\om}\circ\cdots\circ T_{\te\om}\circ T_\om,\,\om\in\Om$, where $(\Om,\cF,P,\te)$ is some ergodic  invertible measure preserving system, which can be viewed as a  ``driving process". The central limit theorem (CLT) for partial sums generated by random dynamical systems has been studied by many authors. Recently, finer results such as the local central limit theorem (LCLT) and the Berry-Esse\'en theorem (optimal convergence rate in the CLT) have been obtained for several classes of random uniformly distance expanding and hyperbolic  dynamical systems (see \cite{drag}, \cite{drag1} and Ch.7 of \cite{book}). These results rely on certain types of analysis of complex transfer operators, and they did not cover, for instance random non-uniformly distance expanding maps. 

A related, but more general, setup is the case when the underlying sequence of random variables has the form
$X_n=T_0^n\textbf{x}_0$,  where $\textbf{x}_0$ is some random variable and $T_0^n=T_{n-1}\circ T_{n-2}\circ\cdots\circ T_0$  for some given sequence of  maps $T_0,T_1,T_2,...$. Results in this direction where obtained, for instance, in \cite{Arno}, \cite{Conze}, \cite{Zemer} and \cite{Nicol} and references therein. The setup of random dynamics is a special case of this setup, where $T_j=T_{\te^j\om}$ are stationary random maps. In  \cite{dragASIP} and \cite{Hyde} the authors proved an almost sure real-valued invariance principle (ASIP) for random and sequential dynamical systems, which means that the underlying partial Birkhoff sums can be approximated by a sum of independent Gaussian random variables with an error term which is smaller than the square root of the variance of the partial sum (such an estimates yields the law of iterated logarithm). Both papers invoked a recent result on ASIP  for ``reverse" martingales due to C. Cuny and F. Merlev\`{e}de \cite{CM}, and assumed that the underlying transfer operators preserve the same probability measure, which essentially means the exponent of the underlying potential function is the inverse of the Jackobian. We stress that these results were obtained for real-valued observables, for which the results in \cite{CM} apply.

In this paper we will prove an ASIP with for parital sums of the form $S_n(x)=\sum_{j=0}^{n-1}u_j\circ T_0^j(x)$, where $u_j$ is seqeunce of vector-valued H\"older continuous or differentiable functions and $x$ is distributed accodring to a special measure $\mu_0$, and, for instance, each $T_j$ is a non-uniformly distance expanding map satisfying the conditions in \cite{castro} and \cite{Vara}. Our results hold true 
when the covariance matrix of $S_{j,n}=\sum_{k=0}^{n-1}u_{j+k}\circ T_{j}^{k}$ grows linearly fast  in $n$ uniformly in $j$, a condition which we verify in the case when all the maps $T_j$ and  the functions $u_j$ lie in some neighborhood of appropriate map $T$ and a function $u$ (see also Remark \ref{Rem1}).
 Even when all the maps $T_j$ coincide with the same map $T$ from \cite{castro} our results are new, and in this case they yield an ASAP for the sums $S_n(x)$ when $x$ is distributed according to one of the equilibrium states constructed in \cite{castro} (e.g. the unique measure with maximal entropy). We want to stress that even in the deterministic setup of \cite{castro}, it is unclear in which circumstances one can choose the underlying potential to be the inverse of the Jacobian of $T$.

In \cite{YeorDavor} an ASIP for random hyperbolic and uniformly distance expanding maps has been derived using a certain modification of the spectral method of Gou\"ezel \cite{GO} for non-stationary sequences, and the vector-valued obtained in this manuscript relies on this modification, as well. This method requires that  appropriate complex perturbations of the
the underlying sequence of complex transfer operators to have certain ``spectral" properties, which in the setup of this paper are obtained using the theory of complex projective metrics developed in \cite{Rug} (which was applied in Ch. 5 of \cite{book} with uniformly distance expanding maps).

Once the appropriate projective contraction properties of the underlying (real) transfer operator are established, in the case of real-valued observables $u_j$ and when the $T_j$'s are uniformly H\"older continuous it seems plausible that the results for reverse martingales in \cite{CM} can yield the ASIP (by using the scheme in \cite{dragASIP}), but in this paper we consider vector-valued observables, and we do not assume H\"older continuity of the underlying maps.
Moreover, using  complex projective metrics (associated with complex cones),  a Berry-Esse\'en theorem,  
and moderate and (local) large deviations principles  for random non-uniformly distance expanding dynamical systems follow, which is another advantage in using cones. For certain classes of weakly-expanding random maps (e.g. Manneville-Pomeau maps) we will also obtain a local central limit theorem, see Remark \ref{Rem2}. In fact, for such maps we are able to obtain the ASIP under weaker assumptions (see Remark \ref{Rem1}).

This paper is organized as follows. In  Section \ref{Sec2} we  describe our setup and state our main results.  Section \ref{Sec4} contains the additional tool required for the ASIP: we  obtain there a sequential Ruelle-Perron-Frobenius (RPF) theorem for an appropriate sequence of parametrized complex transfer operators (by applying contraction properties of complex cones). We will also apply these results in order to control the covariance of the underlying partial sums, which will yield a uniform control over the norm of these complex operators, an ingredient which is crucial for  applying  the aforementioned modification of Gou\"ezel's ASIP.
In Section \ref{Sec5} the proof of the main result is finalized. Finally, in Section \ref{RDS} we will obtain additional limit theorems for of real-valued observables in the case of random non-uniformly distance expanding dynamical systems.

\section{Preliminaries and main results}\label{Sec2}
We will consider in this paper two types of models of sequences of non-uniformly distance expanding maps.

\subsection{Locally smooth non-uniformly distance expanding maps}\label{Model 1}
In this section we will start from the setup in \cite{castro} and \cite{Vara}. In some sense, the model considered here is less general than the one considered in Section \ref{Model 2}, but it is more explicit and therefore we present it first.

Let $M$ be a finite dimensional compact and connected Riemnnian manifold  with distance $\rho$. Let $T:M\to M$ be a
local homeomorphism and assume that there exists a continuous function $x\to L(x)$ such that, for every $x\in M$ there
is a neighborhood $U_x$ of $x$ that $T_x:U_x\to T(U_x)$ is invertible and
\[
\rho(T_x^{-1}(y),T_x^{-1}(z))\leq L(x)d(y,z),\,\,\forall y,z\in T(U_x).
\]
In particular every point has the same finite number of preimages $\deg(T)$ which coincides with the degree of $T$. Our additional assumption is that there exist constants
$\sig>1$ and $L\geq 1$, and an open region $A\subset M$ such that
\\

(H1) $L(x)\leq L$ for every $x\in A$ and $L(x)<\sig^{-1}$ for all $x\notin A$;
\\

(H2) There exists a finite covering $U$ of $M$ by open domains of injectivity for $T$ such that A can be covered by
$q<\deg(T)$;
\\

Next, let $\phi:M\to\mathbb R$ be a $C^1$-function. 
Our further restrictions on the function $\phi$, and the constants appearing in (H1) and (H2) are summarized in the following 
\begin{assumption}\label{Bound ass}
There exists a constant $\ve>0$ so that  
\[
\sup{\phi}-\inf{\phi}\leq \ve
\]
and
\begin{equation}\label{s sup}
s:=e^{\ve}\cdot\frac{qL+(\deg(T)-q)\sig^{-1}}{\deg(T)}<1.
\end{equation}
\end{assumption}

Next, let $T_j:M\to M$ be a sequence of maps satisfying (H1) and (H2) and $\phi_j:M\to\mathbb R$ be a sequence of scalar $C^1$-functions. Let $d\geq 1$ and $u_j:M\to\mathbb R^d$ be a sequence of vector-valued $C^1$ functions. For each $j$ and $n\geq0$, consider the partial sums 
\[
S_{j,n}=\sum_{k=0}^{n-1}u_{j+k}\circ T_{j+n-1}\circ\cdots\circ T_{j+1}\circ T_j
\] 
and set $S_n=S_{0,n}$.
Our main result in the above setup is the following
\begin{theorem}\label{ASIP 1}
Suppose that (H1), (H2) and Assumption \ref{Bound ass} are satisfied.
Let $\mu_j=h_j^{(0)}d\nu_j^{(0)}$  be the probability measures from Theorem \ref{RPF SDS} and assume that there exists a constant $c>0$ so that for any sufficiently large $n$ and any $v\in\mathbb R^d$ we have
\begin{equation}\label{Uniform Cov}
\inf_{j}\text{Cov}_{\mu_j}S_{j,n} v\cdot v\geq cn|v|^2.
\end{equation}
Then there exists $\ve>0$  so that if $(T_j,\phi_j),\,j\in\bbZ$ belong to a $C^1$-ball of radius $\ve$ around $(T,\phi)$ then, for any $\del>0$ there is a coupling between $(u_j\circ T_0^j)_j$, considered as a sequence of random variables on $(M,\mu_0)$, and  a sequence of centered Gaussian random vectors $Z_1,Z_2,...$ so that 
\[
\Big|S_n-\int S_n(y)d\mu_0(y)-\sum_{j=1}^n Z_j\Big|=o(n^{\frac 14+\del}),\quad\text{almost-surely}.
\]
 Moreover, with $\cS_n=S_n-\int S_n(y)d\mu_0(y)$,
 there exists a constant $C>0$ so that for any unit vector $v\in\mathbb R^d$,
\begin{equation}\label{Var est}
\Big\|\cS_n\cdot v\Big\|_{L^2}-Cn^{\frac14+\del}\leq \Big\|\sum_{j=1}^n Z_j\cdot v\Big\|_{L^2}\leq \Big\|\cS_n\cdot v \Big\|_{L^2}+Cn^{\frac14+\del}.
\end{equation}
\end{theorem}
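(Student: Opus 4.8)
The plan is to follow the spectral approach of Gou\"ezel \cite{GO}, in the non-stationary modification carried out in \cite{YeorDavor}, applied to a suitable sequence of complex transfer operators associated with the data $(T_j,\phi_j,u_j)$. The first step is to construct, for each $j$, the complex transfer operator $\cL_j^{it v}$ acting by $(\cL_j^{itv}g)(x)=\sum_{T_j(y)=x}e^{\phi_j(y)+it\langle v,u_j(y)\rangle}g(y)$, and to establish the requisite ``block'' spectral estimates: there exist constants $C,\kappa>0$ and $r_0>0$ so that for $|t|\le r_0$ and any $v$, the product $\cL_{j}^{itv}\circ\cdots\circ\cL_{j+n-1}^{itv}$ decomposes, after normalization by the real leading eigenfunctions/eigenvalues of Theorem \ref{RPF SDS}, as $\lambda_{j,n}(tv)\,\nu_{j}^{(0)}(\cdot)\,h_{j+n}^{(0)} + (\text{error of size } Ce^{-\kappa n})$, with $\lambda_{j,n}(tv)$ the corresponding complex eigenvalue. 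This is exactly the conclusion of the sequential complex RPF theorem of Section \ref{Sec4}: the real projective contraction of the cones, extended to the complex cones of \cite{Rug}, gives a uniform contraction rate for the normalized complex operators once $|t|$ is small, which yields both the existence of the analytic family $tv\mapsto(\lambda_{j,n}(tv),h_{j,n}^{(itv)})$ and the exponential gap. I would then record the second-derivative identity at $t=0$ identifying $\partial_t^2\log\lambda_{0,n}(tv)|_{t=0}$ with $\mathrm{Cov}_{\mu_0}S_n v\cdot v$ up to a uniformly bounded error, so that \eqref{Uniform Cov} translates into the lower bound on the variance that Gou\"ezel's scheme requires.

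The second step is to verify the hypotheses of the vector-valued non-stationary ASIP theorem of \cite{YeorDavor} (the modification of \cite{GO}). Concretely one needs: (i) the characteristic function estimate $\bbE_{\mu_0}\big[e^{i\langle t, \cS_n\rangle}\big]=\lambda_{0,n}(t)\big(1+O(e^{-\kappa n})\big)$ uniformly for $|t|\le r_0$, which follows from applying the block decomposition to the constant function $\mathbf 1$ and integrating against $\nu_0^{(0)}$, together with $\mu_0=h_0^{(0)}d\nu_0^{(0)}$; (ii) the corresponding estimate for ``segments'' $\cS_{j,n}=\sum_{k=j}^{j+n-1}u_k\circ T_0^k$, with constants uniform in $j$; (iii) the regularity of $t\mapsto\lambda_{0,n}(t)$ near $0$ with bounds on the first three derivatives that are uniform in $n$ after dividing by $n$ (Gou\"ezel needs control of $\log\lambda_{0,n}$ and its derivatives); and (iv) the uniform lower bound $\mathrm{Cov}\,\cS_{j,n}\ge cn\,I_d$ coming from \eqref{Uniform Cov}. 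Here is where the $C^1$-ball hypothesis enters: one shows that the relevant analytic objects depend continuously (in the appropriate sense) on $(T_j,\phi_j)$ within a small $C^1$-neighborhood of $(T,\phi)$, so Assumption \ref{Bound ass} and the estimate $s<1$ are preserved uniformly along the sequence, giving uniform constants in (i)--(iii); see Remark \ref{Rem1}. Feeding (i)--(iv) into \cite{YeorDavor} produces the coupling with independent centered Gaussians $Z_j$ and the error bound $o(n^{1/4+\delta})$.

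The third step is the $L^2$ comparison \eqref{Var est}. This is a soft consequence of the a.s.\ bound: fix a unit vector $v$; the ASIP coupling gives $\cS_n\cdot v-\sum_{j=1}^n Z_j\cdot v = R_n$ with $|R_n|=o(n^{1/4+\delta})$ a.s., and moreover the construction in \cite{GO}/\cite{YeorDavor} provides $L^2$ (indeed $L^p$ for every $p$) control of $R_n$ of the same order, $\|R_n\|_{L^2}\le Cn^{1/4+\delta}$ — this uses the uniform exponential estimates on the complex operators to bound moments of $\cS_n$ and of the Gaussian blocks. Then \eqref{Var est} is just the triangle inequality in $L^2$: $\big|\,\|\cS_n\cdot v\|_{L^2}-\|\sum Z_j\cdot v\|_{L^2}\,\big|\le\|R_n\|_{L^2}\le Cn^{1/4+\delta}$, with $C$ independent of $v$ because all the estimates above were uniform over unit vectors.

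The main obstacle I anticipate is Step 1 in the non-uniformly expanding setting: unlike the uniformly expanding case treated in Ch.~5 of \cite{book}, the cone here (adapted to the function $L(x)$ and the region $A$ via $s<1$) is more delicate, and one must check that the \emph{complex} cones built from it still contract uniformly under $\cL_j^{itv}$ for small $t$, with a rate and with ``aperture'' bounds that do not degenerate along the sequence — this is precisely what must hold uniformly over the $C^1$-ball. A secondary difficulty is the bookkeeping needed to make every constant in (i)--(iii) independent of the block position $j$ and of $n$ (only after normalizing by $n$), since Gou\"ezel's scheme is sensitive to exactly which derivatives of $\log\lambda_{0,n}$ are controlled; this is routine but must be done carefully, and the linear lower bound \eqref{Uniform Cov} is used crucially to prevent the Gaussian approximation from collapsing.
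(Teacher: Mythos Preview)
Your outline follows the paper's route: complex-cone RPF theorem $\Rightarrow$ spectral estimates $\Rightarrow$ Gou\"ezel's condition (H) $\Rightarrow$ the non-stationary ASIP of \cite{YeorDavor}. However, there is a genuine gap in how you use \eqref{Uniform Cov}.

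You treat \eqref{Uniform Cov} only as supplying the variance lower bound (your item (iv)), and you expect the uniform operator bounds in (i)--(iii) to come out of the complex RPF theorem plus the $C^1$-ball hypothesis. That is not how the argument works here. The complex RPF theorem (Theorem~\ref{RPF SDS}) gives
\[
\Big\|\frac{\cL_{it}^{j,n}g}{\la_{j,n}(it)}-\nu_j^{(it)}(g)\,h_{j+n}^{(it)}\Big\|\le A\|g\|\,\del^n,
\]
with the \emph{complex} triplet $(\la_{j,n}(it),h^{(it)},\nu^{(it)})$, not the real one as you wrote. This says nothing about the size of $|\la_{j,n}(it)|$, hence nothing about $\|\tilde\cL_{it}^{j,n}\|$ itself. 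In the uniformly expanding case one gets a uniform bound on $\|\tilde\cL_{it}^{j,n}\|$ from a Lasota--Yorke inequality; in the present non-uniformly expanding setting no such inequality is available (this is precisely the content of Remark~\ref{Rem1}, which you cite but misread). The paper closes this gap with Proposition~\ref{Norm Prop}: the Hessian identity \eqref{CovDiff} together with the \emph{uniform-in-$j$} covariance growth \eqref{Uniform Cov} forces $\Re\,\Pi_{j,n}(it)\le -c|t|^2 n$, whence $\|\tilde\cL_{it}^{j,n}\|\le Ce^{-c|t|^2 n}$, and in particular the supremum over $j,n$ is finite. Without this step the products appearing in Gou\"ezel's condition (H) are not controlled and the verification in Section~\ref{Sec5} collapses. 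So \eqref{Uniform Cov} plays a double role: it yields (Go1) \emph{and} it substitutes for Lasota--Yorke to give the uniform operator bound; your sketch only accounts for the first.

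Two smaller points. First, the $C^1$-ball hypothesis enters earlier than you suggest: it is used already in the proof of the real cone inclusion \eqref{Inclusion} (via the continuity result of \cite[Prop.~5.3]{castro}), not merely to make constants uniform. Second, for \eqref{Var est} the paper simply quotes it as part of Theorem~\ref{Gouzel Thm}; your triangle-inequality argument would require an $L^2$ bound on $R_n$ that you have not established (the a.s.\ bound alone does not give it).
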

In Section \ref{CovSec} we will show that (\ref{Uniform Cov}) holds true if $\ve$ is small enough and the functions $u_j$ lie in some $C^1$-neighborhood of a function $u$ which is not a couboundary with respect to $T$.
We want also to stress that when considering the initial measure $\mu_0$, the correlation between the summands $u_{j+k}\circ T_{j+n-1}\circ\cdots\circ T_{j+1}\circ T_j$ converge exponentially fast to $0$.
Moreover, when $T_j=T$ and $\phi_j=\phi$ then $\mu_0$ is the unique equilibrium state corresponding to $T$ and $\phi$ constructed in \cite{castro}. 
When $\phi=0$ then we just get the unique measure with maximal entropy.  %We also remark that when $\phi_j$ is the Jackobian of $f_j$ then $\nu_0$ is just the volume measure on $M$. 
Furthermore, similarly to \cite{MSU}, it is possible to show that the measures $\nu_j$ from Theorem \ref{RPF SDS} are conformal in the sense that for any measurable set such that $T_j|A$ is injective we have
\begin{equation}\label{Conformal}
\nu_{j+1}(T_j(A))=e^{\Pi_j(0)}\int_A e^{-\phi_j(x)}d\nu_j(x)
\end{equation}
where $\Pi_j(0)$ is the logarithm of $\la_j(0)$ from Theorem \ref{RPF SDS}. This coincides with the definition of conformal measures in \cite{castro} in the case of a single map $T$ and a potential $\phi$. It is also important to note that, in the random dynamical system case considered in Section \ref{RDS}, the measure $\mu_0=\mu_\omega$ is the disintegration of a probably measure on the skew-product space, which is invariant and ergodic with respect to the skew product map (namely, it is the so called, random Gibbs measure, see Section \ref{RDS}). 
 %talk about certain decay of correlations that we have here...

\subsubsection{Examples}
In this section we will give several examples for maps $T$ and $T_j$ (most of them are discussed in \cite{castro} and \cite{Vara}).

\begin{example}\label{Ex1.0}
Consider an interval map $g:[0,1)\to [0,1)$ of the form $g(x)=mx\text{ mod}1$. Take a small open subinterval of each monotonicity interval of $g$, and perturb $g$ on this interval in such a way that the resulting new inverse branch will have derivatives smaller than $1$ at some points. Denote by $T:\mathbb T\to\mathbb T$ the resulting map. There are many ways to construct such maps $T$, and we can consider a sequence $T_j$ of such perturbations of $g$. The ASAP holds true if all of these perturbations are sufficiently close to $g$, say, and the functions $\phi_j$ have sufficiently small oscillation.
\end{example}

The following example comes from \cite{castro}.

\begin{example}\label{Ex1.1}
Let $g:\mathbb T^d\to\mathbb T^d$ be a linear expanding map. Fix some covering $\cU$ by domains of injectivity for $g$
and some $U_0\in U$ containing a fixed (or periodic) point $p$. Then deform $g$ on a small neighborhood $P_1$ of $p$ inside
$U_0$ by a pitchfork bifurcation in such a way that $p$ becomes a saddle for the perturbed local diffeomorphism $T$ . In
particular, such perturbation can be done in the $C^r$-topology, for every $r>0$. By construction, $T$ coincides with $g$ in
the complement of $P_1$, where uniform expansion holds. Observe that we may take the deformation in such a way that
$T$ is never too contracting in $P_1$, which guarantees that conditions (H1) and (H2) hold, and that $T$ is still topologically
exact. Assumption \ref{Bound ass}
 is clearly satisfied by any $C^1$-potential with a sufficiently small oscillation. 
\end{example}

\begin{example}[Manneville-Pomeau map]\label{MP map}
For each  $\beta\in(0,1)$, let $f_\beta:[0,1]\to[0,1]$ be the $C^{1+\beta}$-local diffeomorphism
given by
\begin{equation*}
f_\beta(x)=
\begin{cases}
x(1+2^\be x^\beta)\text{ if }0\leq x\leq\frac12\\
2x-1\text{ if }\frac 12<x\leq 1\\
\end{cases}
\end{equation*}
Then our assumptions hold true if  $T=f_\be$ for some $\be$ and $\phi$ is a $C^1$-function with sufficiently small oscillation. We refer to Example \ref{Ex2.1} which includes the case when each $T_j$ is a  Manneville-Pomeau map and $\phi_j=-t_j\log J(f_j)$ for a sufficiently small $t_j$ (in the setup there $\phi_j$ does not have to be a $C^1$-function).
\end{example}

We also want to mention the unimodel map $T(x)=-\frac 18 x(x-1)(x+\frac 18)$ considered in Example 2.5 in \cite{Vara}. For instance, in these circumstances, we can consider the simple case when $T_j=T$ and $\phi_j=0$ for each $j$, but $u_j$ may depend on $j$. This shows that a vector-valued ASIP holds true when $\mu_0=\mu$ is the unique measure of maximal entropy corresponding to $t$. When $(T_j,\phi_j)$ are only close to $(T,0)$ then $\mu_0$ is a certain perturbation of $\mu$, and we still get the ASIP with this initial measure.

\subsection{Non-uniformly ``expanding" covering maps}\label{Model 2}
Let $(\cX_j,\rho_j),\,j\in\bbZ$ be a two sided sequence of bounded metric spaces, noramlized in size so that $\text{diam}(\cX_j)\leq 1$, and let $T_j:\cX_{j}\to\cX_{j+1}$ be a sequence of maps satisfying the following

\begin{assumption}\label{Pairing ass}
There exist two sided sequences $(L_j)$, $(\sig_j)$, $(q_j)$ and $(d_j)$ so that $L_j\leq L$ for some $L\geq1$ and for each $j$ we have
 $\sigma_j>1$, $L_j\geq 1$, $q_j,d_j\in\bbN$, $q_j<d_j$ and for any $x,x'\in\cX_{j+1}$ we can write 
\[
T_j^{-1}\{x\}=\{x_1,...,x_{d_j}\}\,\,\text{ and }\,\,T_j^{-1}\{x'\}=\{x'_1,...,x'_{d_j}\}
\]
where for any $i=1,2,...,q_j$,
\[
\rho_j(x_i,x'_i)\leq L_j\rho_{j+1}(x,x')
\]
while for any $i=q_{j}+1,...,d_j$,
\[
\rho_{j}(x_i,x'_i)\leq \sig_j^{-1}\rho_{j+1}(x,x').
\]
\end{assumption}
An immediate example is the case when all the maps $T_j$ satisfy Assumptions (H1) and (H2) uniformly in $j$ (see the proof of Theorem 5.1 in \cite{castro} applied with each $T_j$ separately), but there are different examples (see Section \ref{Examples 2}).

Next, let $\alpha\in(0,1]$ and let $\phi_j$  be a sequence of bounded real-valued H\"older functions on $\cX_j$ with exponent $\alpha$. Denote by $\cH_j$ the space of such functions equipped with the norm 
\[
\|g\|=\|g\|_\infty+v(g)
\]
where $\|g\|_\infty=\sup|g|$ and $v(g)$ is the smallest number so that $|g(x)-g(y)|\leq v(g)\big(\rho_j(x,y)\big)^\al$ for any $x$ and $y$ in $\cX_j$.
In the case when $\al=1$ and each $\cX_j$ is a Riemannian manifold we will also consider the norms $\|g\|=\|g\|_{C^1}=\sup|g|+\sup\|Dg\|$ on the space of $C^1$-functions, namely $v(g)$ above is replaced by the supremum norm of the deferential of $g$ (so in this case $v(g)$ could either be the Lipschitz constant or $\sup\|Dg\|$). 
Our additional requirements  from the function $\phi_j$ are summarized in the following

\begin{assumption}\label{Bound ass2}
We have $\sup_j\|\phi_j\|<\infty$,
\[
\sup{\phi_j}-\inf{\phi_j}\leq \ve_j\,\,\text{ and }\,\sup_j\sup_x\sum_{y\in T_j^{-1}\{x\}}e^{\phi_j(y)}<\infty,
\]
where $\ve_j$ is a sequence of positive constants satisfying
\begin{equation}\label{s sup2}
s:=\sup_{j}e^{\ve_j}\cdot\frac{q_jL_j^\al+(d_j-q_j)\sig_j^{-\al}}{d_j}<1.
\end{equation}
\end{assumption}
The inequality (\ref{s sup}) is a quantitative estimate on the amount of contraction is allowed, given the amount of expansion $T_j$ has, and the oscillation of $\phi_j$.

Next, let $d\in\bbN$ and $u_j:\cX_j\to\mathbb R^d$ be a sequence of vector-valued functions so that $u_j\in\cH_j$ for each $j$ and the sequence of norms $\|u_j\|$ is bounded in $j$. For each $n$ and $j$ set
\[
S_{j,n}u=\sum_{i=0}^{n-1}u_{i+j}\circ T_{j}^i, 
\]
where $T_j^n=T_{j+n-1}\circ T_{n-2}\circ\cdots\circ T_j$. 
Let $\mu_j=h_j^{(0)}d\nu_j^{(0)}$ be the sequence of equivariant Gibbs measures (i.e. $(T_j)_*\mu_j=\mu_{j+1}$) constructed in Theorem \ref{RPF SDS} (so that (\ref{Exponential convergence}) and (\ref{ExpDec}) hold true).
We note the measures $\nu_j^{(0)}$ satisfy all the properties discussed after Theorem \ref{ASIP 1} in the circumstances on this section, as well. Our main result here is the following almost sure invariance principle:

\begin{theorem}\label{ASIP2}
Suppose that Assumptions \ref{Pairing ass} and \ref{Bound ass} hold true.
Assume also that there exists a constant $c>0$ so that for any sufficiently large $n$ and any $v\in\mathbb R^d$ we have
\begin{equation}\label{C}
\inf_{j}\text{Cov}_{\mu_j}(S_{j,n}) v\cdot v\geq cn|v|^2.
\end{equation}
Then  for any $\del>0$, there exists a coupling between $(u_j\circ T_0^j)$, considered as a sequence of random variables on the probability space $(\cX_0,\mu_0)$, and a sequence of centered Gaussian random vectors $Z_1,Z_2,...$ so that 
\[
\Big|S_n-\mu_0(S_n)-\sum_{j=1}^n Z_j\Big|=o(n^{\frac 14+\del}),\quad\text{almost surely}.
\]
 Moreover, with $\cS_n=S_n-\mu_0(S_n)$, there exists a constant $C>0$ so that for any unit vector $v\in\mathbb R^d$,
\begin{equation}\label{Var est}
\Big\|\cS_n\cdot v\Big\|_{L^2}-Cn^{\frac 14+\del}\leq \Big\|\sum_{j=1}^n Z_j\cdot v\Big\|_{L^2}\leq \Big\|\cS_n\cdot v \Big\|_{L^2}+Cn^{\frac 14+\del}.
\end{equation}
\end{theorem}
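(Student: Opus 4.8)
The strategy is to reduce the vector-valued ASIP to the abstract spectral criterion of Gou\"ezel, in the non-stationary form used in \cite{YeorDavor}. First I would note that by the conditional-expectation/coupling device (replacing $x$ distributed according to $\mu_0$ by its images under $T_0^j$) the sequence $(u_j\circ T_0^j)$ has the same distribution as the one generated by pushing forward the equivariant Gibbs measures; the key point is that the characteristic functions of the partial sums $\cS_{j,n}\cdot v = S_{j,n}u\cdot v - \mu_j(S_{j,n}u\cdot v)$ are governed by the iterates of complex transfer operators. Concretely, for a parameter $z\in\bbC^d$ one forms the twisted operators $\cL_j^{z}g = \cL_j(e^{\langle z,u_j\rangle}g)$ (where $\cL_j$ is the normalized Gibbs operator from Theorem \ref{RPF SDS}), and one has the exact identity $\mu_{j+n}\big(\cL_{j}^{z,n}(h_j^{(0)})\big)=\int e^{\langle z,S_{j,n}u\rangle}\,d\mu_j$, where $\cL_j^{z,n}$ denotes the composition along the block $[j,j+n)$. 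This is the standard dictionary; the content is to verify Gou\"ezel's hypotheses for these operators.

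The main input is Section \ref{Sec4}: the sequential RPF theorem obtained there via complex projective metrics gives, for $z$ in a fixed complex neighborhood of $0$, a block decomposition $\cL_j^{z,n} = \la_{j,n}(z)\,\nu_{j+n}^{(z)}\otimes h_{j}^{(z)} + \text{(exponentially small remainder)}$, with the leading eigenvalue-type quantities $\la_{j,n}(z)=\exp(\Pi_{j,n}(z))$ depending analytically on $z$, uniformly in $j$, and with $\Pi_{j,n}(0)=0$ in the normalized setting. Writing $\Pi_{j,n}(it) = i\,\mu_j(\cS_{j,n}u)\cdot t - \tfrac12 \Sigma_{j,n}t\cdot t + O(n|t|^3)$, the second-order term is exactly the covariance $\Sigma_{j,n}=\mathrm{Cov}_{\mu_j}(S_{j,n}u)$, and here the uniform lower bound (\ref{C}) enters: it guarantees that the Gaussian parameters $\Sigma_{j,n}/n$ stay bounded below, which is precisely the non-degeneracy condition Gou\"ezel requires. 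In parallel one needs the companion \emph{upper} bound on $\|\Sigma_{j,n}\|$ (equivalently a uniform bound on the norms of the complex operators $\cL_j^{z}$ on $\cH_j$), which the excerpt flags as being derived in Section \ref{Sec4} from the cone estimates together with control of the covariance; I would cite that. With the two-sided control of $\Sigma_{j,n}$, the analyticity and uniformity of $\Pi_{j,n}$, the exponential decay of the remainder, and the a priori bound $\|\cL_j^{z,n}\|\le C$ uniformly, all of Gou\"ezel's assumptions (A1)--(A4) (in the sequential version of \cite{YeorDavor}) hold; his theorem then produces, for every $\del>0$, the coupling with centered Gaussians $Z_1,Z_2,\dots$ and the error term $o(n^{1/4+\del})$ almost surely.

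Finally, (\ref{Var est}) follows from the construction of the coupling: the Gaussian block $\sum_{j=m+1}^{n}Z_j$ has covariance matrix $\Sigma_{0,n}-\Sigma_{0,m}$ up to the same small error, so for a unit vector $v$ one has $\big\|\sum_{j=1}^n Z_j\cdot v\big\|_{L^2}^2 = \Sigma_{0,n}v\cdot v + O(n^{1/2+\del})$ while $\|\cS_n\cdot v\|_{L^2}^2=\Sigma_{0,n}v\cdot v$; taking square roots and using $\sqrt{a}-\sqrt{b}\le\sqrt{|a-b|}$ together with the linear lower bound on $\Sigma_{0,n}$ converts the $O(n^{1/2+\del})$ discrepancy in variances into the claimed $Cn^{1/4+\del}$ discrepancy in $L^2$-norms.

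The step I expect to be the main obstacle is establishing the quantitative analytic control of $\Pi_{j,n}(z)$ \emph{uniformly in $j$} together with the uniform operator-norm bound $\|\cL_j^{z,n}\|\le C$: the complex-cone contraction in \cite{Rug} is naturally formulated block-by-block and producing estimates that are genuinely uniform over all starting indices $j$ — so that Gou\"ezel's machinery applies to the full two-sided sequence — requires care, and it is here that the linear-growth hypothesis (\ref{C}) is doing real work (without it the rescaled covariances could degenerate and the Gaussian approximation would be meaningless). Everything downstream of the sequential complex RPF theorem is then a routine, if somewhat lengthy, verification.
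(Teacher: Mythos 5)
Your proposal follows essentially the same route as the paper: reduce to the sequential version of Gou\"ezel's spectral ASIP criterion (Theorem \ref{Gouzel Thm} from \cite{YeorDavor}), use the complex-cone RPF theorem of Section \ref{Sec4} to get the block decomposition and exponential convergence of the twisted operators, and use the uniform covariance lower bound (\ref{C}) both as Gou\"ezel's non-degeneracy condition and to obtain the uniform operator-norm control via Proposition \ref{Norm Prop}. The only loose phrasing is the parenthetical suggesting an upper bound on $\|\Sigma_{j,n}\|$ is ``equivalent'' to the uniform norm bound on the complex operators --- the former is trivial here while the latter is the delicate point, which in the paper is derived from the lower bound (\ref{C}) rather than an upper bound --- but this does not affect the correctness of the overall argument.
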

In Section \ref{CovSec} we will show that (\ref{C}) holds true when $T_j,\phi_j$ and $u_j$ are sufficiently small perturbations of a single map $T$, a function $\phi$ and a vector-valued function $u$, respectively.

\begin{remark}\label{Rem1}
We want to stress that in both Theorems \ref{ASIP 1} and \ref{ASIP2} the condition about the uniform growth rate of the covariances is, in principle, not needed in order to apply the spectral methods in \cite{GO} (see Theorem \ref{Gouzel Thm}).
In our circumstances, the simpler condition 
\begin{equation}\label{Simpler}
\text{Cov}_{\mu_0}(S_{0,n}) v\cdot v\geq cn|v|^2
\end{equation}
is enough, if we assume that
\begin{equation}\label{U}
\sup_{j,n}\sup_{|t|\leq r_0}\|\cL_{it}^{j,n}\|\leq C
\end{equation} 
 for some $r_0$ and $C$, where $\cL_{it}^{j,n}$  are the transfer operators $\cL_{it}^{j,n}$ defined at the beginning of Section \ref{Sec4}. Condition (\ref{Simpler}) holds true in the random dynamical system case considered in Section \ref{RDS}, but with a vector-valued random function $u_\om(x)=u(\om,x)$ which does not admit an $L^2(\mu)$ coboundary representation with respect to the skew product map at some direction $v\in\bbR^d\setminus\{0\}$ (all the notations are given in Section \ref{RDS}, see also the proof of Prposition 2 in \cite{YeorDavor}). 
 The reason we need (\ref{Uniform Cov}) and (\ref{C}) is that they guarantee (\ref{Simpler}). In fact, we show that such uniform lower bounds on the covariances yield exponentially fast decay as $n\to\infty$ of the norms in (\ref{U}), see Proposition \ref{Norm Prop}. For uniformly distance expanding maps a uniform bound on $\|\cL_{it}^{j,n}\|$ follows from the, so called, Lasota-Yorke inequality. This is not true in general for non-uniformly distance expanding maps, but when $L_j=1$ in Assumption \ref{Pairing ass} then a weak Lasota-Yorke inequality holds true which is still enough in order to derive (\ref{U}). In particular, when considering Manneville-Pomeau maps (as in Examples \ref{MP map} and \ref{Ex2.1}) our results hold true under (\ref{Simpler}), without assuming (\ref{Uniform Cov}) and (\ref{C}). Of course, many additional examples of interval and multidimensional maps can be given. 
 
 We also note that for the one dimensional results stated in Section \ref{RDS} we only need that the variance of $S_n$ grows linearly fast (i.e. that (\ref{Simpler}) holds true), which is satisfied when the function $u=u(\om,x)$ does not admit a coboundary representation with respect to the skew product map.
\end{remark}

\subsubsection{Examples}\label{Examples 2}

\begin{example}\label{Ex2.0}
For each $j$ let $g_j:\mathbb T^d\to\bbT^d$ be a linear expanding map. Then Assumptions \ref{Pairing ass} and \ref{Bound ass2} hold true 
when each $T_j$ is a map constructed as in Example \ref{Ex1.1} and $\phi_j$ has a sufficiently small oscillation (in contrast to Example \ref{Ex1.1} we do not require that the $T_j$'s lie in some neighborhood of a given map). In order to verify (\ref{C}) we will need at the end some condition of this form, but the results in Section \ref{RDS} do not require such assumptions.
\end{example}

\begin{example}\label{Ex2.1}
Suppose that $T_j=f_{\be_j}$ for some $\be_j\in(0,1)$, where $f_\be$ is the Manneville-Pomeau map from Example \ref{MP map}.
It is clear that Assumption \ref{Pairing ass} holds true. Moreover, Assumption \ref{Bound ass2} holds true, for instance, 
when $\phi_j=-t_j\log J(T_j)$ for sufficiently small $t_j$. By Proposition 5.3 in \cite{castro}, the transfer operators
\[
L_\beta g(x)=g(y(\beta,x))+g(\frac12(x+1))
\]
are continuous function of $\beta$ when considered as functions from $(0,1)$ to the space of linear operators acting on the space $(\cH,\|\cdot\|_{C^1})$, equipped with the operator norm.
This shows that  the conditions guaranteeing (\ref{C}) hold true when  all the $\be_j$'s are sufficiently close to a specific $\be$ (see Proposition \ref{CovProp}). We remark that for random Manneville-Pomeau maps (considered in Section \ref{RDS}) the lower bound (\ref{Simpler}) holds true when $u(\om,x)$ is not a coboundary at any direction, and so in these circumstances we get the ASIP for random compositions of the form $f_{\be(\te^{n-1}\om)}\circ\cdots\circ f_{\be(\te\om)}\circ f_{\be(\om)}$ where $\be(\om)$ is a random variables taking values at $(0,1)$ and $\te$ is some ergodic measure preserving system.
\end{example}

%Next, we can also consider more examples...interval maps, Pomeu...multidimensional Pomeu etc 

%Next, can also consider a sequential version of \cite{MSU}...

%\begin{remark}\label{Martingale Remark}
%When $\cX_j=M$ and  the functions $\phi_j=-\ln J(T_j)$ satisfy Assumption \ref{Bound ass}, where $J(T_j)$ is the Jacobian of the map $T_j$, then the normalized volume measure $m$ is preserved under each $\cL_0^{(j)}$. In this case we can apply Theorem 3.1 from \cite{Hyde} and derive an ASIP under the weaker assumption that $\text{var}_{\mu_0}(S_{0,n}u)\geq n^{\frac14+\del}$ for some $\del>0$ and any sufficiently large $n$ (this theorem uses reverse martingale approximation). Still, it is unclear under which conditions such a growth rate can be obtained in our context (expect in the circumstances of Theorem \ref{Variance SDS}), and these assumptions concerning the Jackobian are quite restrictive, and it is not clear which maps $T_j$ satisfy them (since now $\phi_j$ and the constants $d_j,L_j,q_j$ and $\sig_j$ depend on the map $T_j$). Moreover, martingale approximations do not yield the additional results obtained in Section \ref{Other} (while as a by product of the proof o Theorem \ref{ASIP} we obtain many additional statistical properties).
%\end{remark}

%In the case of the setup from Castro, the variance, in fact, will grow linearly in $n$ if $n$ is not a cobounary since the tranfer operator will be continuous...I have to cite something here...

%We need to talk here about the "Gibbs" measrue $\mu_j$
Example \ref{Ex2.1} is a specific case of the following 

\begin{example}
Assume that each $T_j$ is an interval map which is uniformly expanding only on some of its monotonicity intervals. Assumption \ref{Bound ass2} in these circumstances restricts the amount of contraction (in terms of the amount of expansion), in a uniform manner.
 A concrete example can be constructed as follows. Let $m_j\geq 2$ be a sequence of integers. For any $j$ define $T_j(x)=m_j x$ for $0\leq x<\frac1{m_j}$. On the interval $[\frac 1{m_j},1)$ we just assume that $T_j$ is one to one and onto $[0,1)$ and that derivative of $f_j$ from the right of $\frac 1{m_j}$ is smaller than $1$ but larger that $L_j^{-1}$ for some $L_j\geq 1$. Then we have $\sig_j=m_j$, $d_j=2$ and $q_j=1$. Now we can take a potential $\phi_j$ with a sufficiently small oscillation. It is clear that it is possible to construct examples with $d_j$ and $q_j$ depending on $j$. Higher dimensional analogous can be considered as well.
\end{example}

\begin{example}[An abstract topological example]
Let $\cX_j$ be a normalized compact metric space (i.e. with diameter $1$), and assume that there exist sequences $\eta_j\in(0,1)$ and $\gam_j>0$ so that $T_j B_j(z,\eta_j)=\cX_{j+1}$ and $\rho_{j+1}(T_j(z_1),T_j(z_2))\geq\gam_j(z_1,z_2)$ for any $j$ and $z,z_1,z_2\in\cX_j$ satisfying $\rho_j(z_1,z_2)<\eta_j$. Assumption \ref{Pairing ass} is satisfied if we assume that $\gam_j$ is bounded from below, and it takes values larger than $1$ on some ball of radius $\eta_j$ and values smaller than $1$ on another ball of the same radius. We note that, in fact, this example is more general that the other examples presented above.
\end{example}

%Theorem \ref{Variance SDS} (ii) is a consequence of Theorem \ref{stability}, together with the analyticity of the RPF triplets and Theorem \ref{MomThm.0} (iii) below, applied with $k=2$.

%The section below should contain at most 5 pages!

\section{A sequential RPF theorem via cones contractions}\label{Sec4}
In the setup of Section \ref{Model 1} for each $j$ we set $\cX_j=M$.
In both setups considered in Section \ref{Sec2}, 
for each $j\in\bbZ$ and $z\in\bbC$, consider the transfer operators $\cL^{(j)}_z$ which maps functions on $\cX_j$ to functions on $\cX_{j+1}$ by the formula
\begin{equation}\label{Tr op}
\cL^{(j)}_zg(x)=\sum_{y\in T_j^{-1}\{x\}}e^{\phi_j(y)+zu_j(y)}g(y).
\end{equation}
We also set $\cL_0^{(j)}=\cL^{(j)}$. For each $j,n$ and $z$ write
\[
\cL_z^{j,n}=\cL_z^{(j+n-1)}\circ\cdots\circ\cL_z^{(j+1)}\circ\cL_z^{(j)}.
\]
It is clear that $\cL_z^{(j)}\cH_j\subset \cH_{j+1}$. We will denote by $(\cL_z^{(j)})^*$ the appropriate dual operator.
Henceforth we will refer to $\sup_j\|u_j\|$ and  the constants in  Assumptions (H1), (H2) \ref{Pairing ass}, \ref{Bound ass} and \ref{Bound ass2} as the ``initial parameters".

\begin{theorem}\label{RPF SDS}
Let the sequence of maps $T_j$ satisfy the conditions from either Section \ref{Model 1} or Section \ref{Model 2}, where in the circumstances of Theorem \ref{ASIP 1} we also assume that $\ve$ appearing there is sufficiently small. 
Then there exists a neighborhood $U$ of $0$, which depends only on the initial parameters, so that for any $z\in U$
there exist families
$\{\la_j(z):\,j\in\bbZ\}$, $\{h_j^{(z)}:\,j\in\bbZ\}$ and $\{\nu_j^{(z)}:\,j\in\bbZ\}$ 
consisting of a nonzero complex number 
$\la_j(z)$, a complex function $h_j^{(z)}\in\cH_j$ and a 
complex continuous linear functional $\nu_j^{(z)}\in\cH_j^*$ such that:

(i) For any $j\in\bbZ$,
\begin{equation}\label{RPF deter equations-General}
\cL_z^{(j)} h_j^{(z)}=\la_j(z)h_{j+1}^{(z)},\,\,
(\cL_z^{(j)})^*\nu_{j+1}^{(z)}=\la_j(z)\nu_{j}^{(z)}\text{ and }
\nu_j^{(z)}(h_j^{(z)})=\nu_j^{(z)}(\textbf{1})=1
\end{equation} 
where $\textbf{1}$ is the function which takes the constant value $1$.
When $z=t\in\bbR$ then 
 $\la_j(t)>a$ and the function $h_j(t)$ takes values at some interval $[c,d]$, where $a>0$ and $0<c<d<\infty$ depend only on the initial parameters. Moreover, $\nu_j^{(t)}$ is a probability measure which assigns positive mass to open subsets of $\cX_j$ and the equality 
$\nu_{j+1}(t)\big(\cL_t^{(j)} g)=\la_j(t)\nu_{j}^{(t)}(g)$ holds true for any 
bounded Borel function $g:\cX_j\to\bbC$.

(ii) The maps 
\[
\la_j(\cdot):U\to\bbC,\,\, h_j^{(\cdot)}:U\to \cH_j\,\,\text{ and }
\nu_j^{(\cdot)}:U\to \cH_j^*
\]
are analytic and there exists a constant $C>0$, which depends only on the initial parameters
such that 
\begin{equation}\label{UnifBound}
\max\big(\sup_{z\in U}|\la_j(z)|,\, 
\sup_{z\in U}\|h_j^{(z)}\|,\, \sup_{z\in U}
\|\nu^{(z)}_j\|\big)\leq C,
\end{equation}
where $\|\nu\|$ is the 
operator norm of a linear functional $\nu:\cH_j\to\bbC$.
Moreover, there exist a constant $c>0$, which depends only on the initial parameters, so that $|\la_j(z)|\geq c$ 
and $\min_{x\in \cX_j}|h_j^{(z)}(x)|\geq c$ for any integer $j$ and $z\in U$.

(iii) There exist constants $A>0$ and $\del\in(0,1)$, 
which depend only on the initial parameters,
 so that for any $j\in\bbZ$, $g\in\cH_j$
and $n\geq1$,
\begin{equation}\label{Exponential convergence}
\Big\|\frac{\cL_z^{j,n}g}{\la_{j,n}(z)}-\nu_j^{(z)}(g)h^{(z)}_{j+n}\Big\|\leq A\|g\|\del^n
\end{equation}
where $\la_{j,n}(z)=\la_{j}(z)\cdot\la_{j+1}(z)\cdots\la_{j+n-1}(z)$. Moreover, the probability measures $\mu_j,\,j\in\bbZ$ given by $d\mu_j=h_j^{(0)}d\nu_j^{(0)}$ 
satisfy that $(T_j)_*\mu_j=\mu_{j+1}$ and that
 for any $n\geq1$ and $f\in\cH_{j+n}$,
\begin{equation}\label{ExpDec}
\big|\mu_j(g\cdot f\circ T_j^n)-\mu_j(g)\mu_{j+n}(f)\big|\leq A\|g\|\mu_{j+n}(|f|)\del^n.
\end{equation}
\end{theorem}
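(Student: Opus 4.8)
The plan is to prove Theorem~\ref{RPF SDS} by running the complex-cone method of \cite{Rug} (as applied in Ch.~5 of \cite{book}) in the non-autonomous setting, in three stages: first a real Birkhoff-cone argument producing the objects for real $z$; then a complexification plus perturbation argument producing the neighborhood $U$ and the analytic families; and finally the extraction of the quantitative estimates.

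\textbf{Real cone and contraction.} Since the setup of Section~\ref{Model 2} contains that of Section~\ref{Model 1} (as remarked after Assumption~\ref{Pairing ass}), I would work with a single real cone in $\cH_j$ of the shape $\cC_j=\{g\in\cH_j:g>0,\ v(g)\le\ka\,\inf g\}$ (with $v(g)$ the H\"older seminorm, or $\sup\|Dg\|$ in the $C^1$ case), for a constant $\ka$ to be chosen from the initial parameters. The key computation --- essentially the one behind Theorem~5.1 of \cite{castro} --- is that Assumption~\ref{Bound ass2} (the inequality $s<1$, which quantifies how much the contracting inverse branches of $T_j$ are compensated by the many expanding ones and by the small oscillation of $\phi_j$) forces $\cL_t^{(j)}\cC_j\subset\cC_{j+1}$ for real $t$ small, with the image having finite Birkhoff (projective) diameter $D$ bounded in terms of the initial parameters only. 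I expect this step to be the main obstacle: in the non-uniformly expanding case one cannot close a one-step Lasota-Yorke contraction on the H\"older seminorm alone, so the cone and the estimate have to be arranged to exploit $s<1$ directly, possibly over a fixed finite block of steps; here the hypothesis that $\ve$ in Theorem~\ref{ASIP 1} is small is also used. Standard non-autonomous Hilbert-metric arguments then give $\la_j(t)$, $h_j(t)$ (strictly positive, hence, once normalized, taking values in a fixed $[c,d]$ by the oscillation bound built into $\cC_j$), and $\nu_j^{(t)}$ as a probability measure; its positivity on open sets and the extension of $\nu_{j+1}(t)\big(\cL_t^{(j)}g\big)=\la_j(t)\nu_j^{(t)}(g)$ to bounded Borel $g$ follow since both sides are (signed) measures agreeing on continuous functions.

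\textbf{Complexification and perturbation.} Next I would take the canonical complexification $\cC_j^{\bbC}$ of $\cC_j$ together with its projective gauge, and invoke the theorem of \cite{Rug} that a real operator whose image has finite real diameter $D$ is a strict contraction of the complexified cones, with factor $\del=\del(D)<1$ depending only on $D$. Since $z\mapsto\cL_z^{(j)}$ is entire with derivatives controlled by $\sup_j\|u_j\|$, a perturbation estimate shows that $\cL_z^{(j)}\cC_j^{\bbC}\subset\cC_{j+1}^{\bbC}$ and the contraction persists for $z$ in a neighborhood $U$ of $0$ depending only on the initial parameters (the room being provided by $s<1$). Iterating, $\cL_z^{j,n}$ contracts $\cC_j^{\bbC}$ at rate $\del^n$. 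The covariant families are then obtained in the usual way: $\nu_j^{(z)}$ as the limit of the normalized dual iterates of $(\cL_z^{j,n})^*$ acting on the complexified dual cone ``from the future''; $h_j^{(z)}$ as the limit of the forward iterates $\cL_z^{j-n,n}\textbf{1}$ renormalized so that $\nu_j^{(z)}(h_j^{(z)})=1=\nu_j^{(z)}(\textbf{1})$; and $\la_j(z)$ read off from $\cL_z^{(j)}h_j^{(z)}=\la_j(z)h_{j+1}^{(z)}$. The gauge contraction gives geometric Cauchy estimates, hence existence, and the normalizations give (\ref{RPF deter equations-General}). Analyticity of $\la_j(\cdot)$, $h_j^{(\cdot)}$, $\nu_j^{(\cdot)}$ on $U$ follows from the locally uniform convergence of these analytic normalized iterates (Weierstrass/Vitali), and the uniform bounds (\ref{UnifBound}), together with $|\la_j(z)|\ge c$ and $\min|h_j^{(z)}|\ge c$, hold because every constant produced above depends only on $\ka$, $D$, $s$ and $\sup_j\|u_j\|$, and a normalized element of $\cC_j^{\bbC}$ has oscillation controlled by $\ka$.

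\textbf{Quantitative consequences.} The estimate (\ref{Exponential convergence}) is the translation of the $\del^n$-contraction of $\cL_z^{j,n}$ on $\cC_j^{\bbC}$ into the norm of $\cH_j$, using the comparison in \cite{Rug} between the projective gauge and the norm distance on the slice $\{\nu_j^{(z)}(\cdot)=1\}$. For the assertions at $z=0$: the invariance $(T_j)_*\mu_j=\mu_{j+1}$ is immediate from the identity $\cL^{(j)}\big((f\circ T_j)\,g\big)=f\cdot\cL^{(j)}g$ together with the eigen-relations at $z=0$; and the decay of correlations (\ref{ExpDec}) is obtained by writing $\mu_j(g\cdot f\circ T_j^n)=\nu_j^{(0)}\big(h_j^{(0)}g\cdot f\circ T_j^n\big)$, pushing this through the dual to level $j+n$ so that it equals $\nu_{j+n}^{(0)}\big(f\cdot\cL_0^{j,n}(h_j^{(0)}g)/\la_{j,n}(0)\big)$, applying (\ref{Exponential convergence}) to $\cL_0^{j,n}(h_j^{(0)}g)$, and using the two-sided bounds on $h_{j+n}^{(0)}$ to dominate the remainder by a constant times $\|g\|\,\mu_{j+n}(|f|)\del^n$.
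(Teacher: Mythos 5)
Your proposal follows essentially the same route as the paper: the paper reduces Theorem~\ref{RPF SDS} to verifying the cone-contraction hypotheses of the complex Ruelle--Perron--Frobenius machinery of \cite{Rug} (as organized in Ch.~4--5 of \cite{book}) by working with exactly the real Birkhoff cone $\cC_{j,\bbR}=\{g>0,\ v(g)\le\ka\inf g\}$, its canonical complexification, a one-step cone inclusion $\cL^{(j)}\cC_{j,\bbR,\ka}\subset\cC_{j+1,\bbR,\zeta\ka}$ with $\zeta=(1+\del)s<1$ (no multi-step block is needed --- the condition $s<1$ is engineered precisely so the one-step contraction of the cone parameter closes), and a perturbation estimate in $z$ of the form $|s_{x,y,t,\ka}(\cL_z^{(j)}g-\cL_0^{(j)}g)|\le c|z|\,s_{x,y,t,\ka}(\cL_0^{(j)}g)$ to propagate this to complex $z$ near $0$. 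The only caveat I'd flag is that your hedging remark (``possibly over a fixed finite block of steps'') is unnecessary, and that the bounded-aperture, linear-convexity and reproducing properties of the cones, which you left implicit, do need to be checked (the paper verifies them) before the abstract complex RPF theorem applies.
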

We want to stress that in the circumstances of Theorem \ref{ASIP2} we get Theorem \ref{RPF SDS} without the additional assumption that $T_j, \phi_j$ and $u_j$ lie in some neighborhood of $T,\phi$ and $u$, respectively. Such a condition is needed only in order to verify (\ref{C}), see Remark \ref{Rem1}. We also want to mention here that the measures $\nu_j$ are conformal in the sense of (\ref{Conformal}).

%We note that, as in \cite{Annealed}, 
%when the $\{T_j\}$ are "sequentially non-singular" the measures $\mu_j$ are absolutely continuous, then we have the following

%\begin{proposition}\label{NonSinThm}
%Let $\textbf{m}_j,\,j\in\bbZ$ be a family of probability measures on $\cE_j$, which assign positive mass to open sets, so that for each $j$
%we have $(T_j)_*\textbf{m}_j\ll\textbf{m}_{j+1}$ and that $e^{-f_j}=\frac{d(T_j)_*\textbf{m}_j}{d\textbf{m}_{j+1}}$. Then for any $j$ we have
%$\la_j(0)=1$ and $\nu_j^{(0)}=\textbf{m}_j$.
%\end{proposition}

The proof of Theorem \ref{RPF SDS} relies on the theory of real and complex cones. We will give a reminder of the appropriate  results concerning this theory  in the body of the proof of Theorem \ref{Complex cones Thm} below, and the readers are referred to Appendix A of \cite{book} for a summary of the main definitions and results concerning contraction properties of real and complex cones.

Theorem \ref{RPF SDS} follows  from the following

\begin{theorem}\label{Complex cones Thm}
There exist $r,d_0>0$ and a sequences $\cC_{j}$ of complex cones so that:

(i) The cones $\cC_j$ and their duals $\cC_j^{*}:=\{\nu\in\cH_j^*:\,\nu(c)\not=0\,\,\,\forall\nu\in\cC_j\setminus\{0\}\}$ have bounded aperture: there exists a constant $A>0$ and complex continuous linear functionals $a_j\in\cH_j^*$ and $b_j\in(\cH_j^*)^*$ so that for any $g\in\cC_j$ and $\la\in\cC_j^*$ we have
\[
\|g\|\leq A|a_j(g)|\,\,\text{ and }\,\,\|\la\|\leq C|b_j(\la)|.
\]

(ii) The cones $\cC_j$ are linearly convex, namely  for any $g\not\in\cC_j$
there exists $\mu\in\cC_j^*$ such that $\mu(g)=0$.

(iii) The cones $\cC_j$ are reproducing: there exist constants $k_0\in\bbN$  and $r_0>0$ so that for any $j$ and $g\in\cH_j$ there are $g_1,...,g_{k_0}\in\cC_j$ so that $g=g_1+...+g_{k_0}$ and 
\[
\|g_1\|+...+\|g_{k_0}\|\leq r_0\|g\|.
\]

(iv) For any $j\in\bbZ$,  and $z\in\bbC$ so that $|z|<r$ we have
\[
\cL_z^{j}\cC_j\subset\cC_{j+1}
\]
and the Hilbert diameter of the image with respect to the complex projective metric corresponding to the cone $\cC_{j+1}$ does not exceed $d_0$.
\end{theorem}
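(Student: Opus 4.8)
The plan is to build the cones $\cC_j$ out of real cones of the form $\cC_{\bbR,j}=\{g\in\cH_j:\,g>0,\ v(g)\leq m\,\inf g\}$ (with $v$ the Hölder/Lipschitz seminorm and $m$ a large constant to be fixed) and then complexify them in the canonical way of \cite{Rug} (as recalled in Appendix A of \cite{book}): $\cC_j=\cC'_{\bbC,j}$ is the set of $g\in\cH_j^{\bbC}$ such that $\overline{\mu(g)}\nu(g)$ lies in a suitable half-plane for all $\mu,\nu$ in the dual of the real cone, or equivalently the $\delta$-neighborhood (in the real Hilbert metric) construction. First I would verify the \emph{real} invariance and contraction: for $t\in\bbR$ small, $\cL_t^{(j)}$ maps $\cC_{\bbR,j}$ into a cone of the same type with a strictly smaller aperture constant, and the Hilbert diameter of the image is uniformly bounded. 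This is exactly the content of the Birkhoff/Liverani-type argument: positivity is immediate from the definition \eqref{Tr op}; the regularity estimate uses Assumptions (H1),(H2) (resp. \ref{Pairing ass}) together with the contraction budget $s<1$ in \eqref{s sup} (resp. \eqref{s sup2}) to show $v(\cL_t^{(j)}g)\leq s\,v(g)\,(\text{something})+ (\text{const})\inf(\cL_t^{(j)}g)$, which for $m$ large enough forces $v(\cL_t^{(j)}g)\leq m\inf(\cL_t^{(j)}g)$ with room to spare, hence a finite diameter $d_0$ independent of $j$.

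Next I would pass to the complex cones. Item (iv) follows from the general principle (Rugh / Dubois, see Appendix A of \cite{book}) that if a real operator contracts a real cone to finite diameter $D$, then for complex parameters in a disc of radius $r$ depending only on $D$ and the initial parameters, the complexified operator $\cL_z^{(j)}$ maps the complex cone into itself with complex Hilbert diameter bounded by some $d_0=d_0(D)$; here the perturbation $e^{zu_j(y)}$ is controlled because $\sup_j\|u_j\|<\infty$ is among the initial parameters, so $|e^{zu_j(y)}|$ and its oscillation are uniformly small for $|z|<r$. Items (i), (ii), (iii) are structural facts about complexifications of "nice" real cones: bounded aperture of $\cC_j$ and $\cC_j^*$ comes from the bounded aperture of the real cone $\cC_{\bbR,j}$ — one may take $a_j(g)=\nu_j^{(0)}(g)$ or simply integration against a fixed reference measure, and $b_j$ the evaluation at $\mathbf 1$-type functional — using that on $\cC_{\bbR,j}$ the norm $\|g\|=\|g\|_\infty+v(g)\leq(1+m)\|g\|_\infty\leq(1+m)\,C\,a_j(g)$; linear convexity and the reproducing property (iii) are precisely Lemma-type statements proved in \cite{Rug} for complexifications of reproducing real cones with bounded aperture, the constant $k_0$ being absolute (one splits a real function into positive/negative parts and then each part into a bounded-oscillation piece plus a large constant, and a complex function into real and imaginary parts, giving $k_0=4$ or so).

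The main obstacle I expect is making the \emph{real} estimates uniform in $j$ in the non-uniformly expanding setting — i.e. getting the Lasota–Yorke-type bound $v(\cL_t^{(j)}g)\leq s\,v(g)\cdot(\dots)+C\inf(\cL_t^{(j)}g)$ with a single aperture constant $m$ and a single diameter $d_0$ valid for all $j$. The delicate point is that the "bad" region $A$ (resp. the $q_j$ contracting branches) contributes terms that are not damped by pure expansion, and one must exploit that it is covered by fewer than $\deg(T)$ (resp. $q_j<d_j$) domains of injectivity, so that after one application of the operator the average over all preimages still contracts the oscillation by the factor $s<1$ from \eqref{s sup}/\eqref{s sup2}; combined with $\sup_j\|\phi_j\|<\infty$ and $\sup_j\sup_x\sum_{y\in T_j^{-1}\{x\}}e^{\phi_j(y)}<\infty$, this yields the uniform bounds. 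Once the real cone is shown to be contracted to a $j$-independent finite diameter, transferring everything to the complex cone and to small complex $z$ is the routine part handled verbatim by the machinery of \cite{Rug} and Appendix A of \cite{book}; I would also remark that in the $C^1$ case $v(g)=\sup\|Dg\|$ the same computation goes through with the chain rule in place of the Hölder estimate, using the continuity of $x\mapsto L(x)$ and the $C^1$-closeness hypothesis where needed.
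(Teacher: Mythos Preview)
Your proposal is correct and follows essentially the same route as the paper: the cones $\cC_j$ are exactly the canonical complexifications of $\cC_{j,\bbR}=\{g>0,\ v(g)\le \ka\inf g\}$, items (i)--(iii) are deduced from the general structural lemmas for such complexifications (bounded aperture via point evaluation and a ball around $\textbf{1}$, linear convexity via Dubois, reproducing with $k_0=4$ by splitting into real/imaginary parts and adding a large constant), and item (iv) is obtained by first proving the real inclusion $\cL_0^{(j)}\cC_{j,\bbR,\ka}\subset\cC_{j+1,\bbR,\zeta\ka}$ with $\zeta=(1+\del)s<1$ using exactly the contracting/expanding preimage count you describe, and then passing to complex $z$ via the Rugh--Dubois perturbation machinery (Theorem~A.2.4 in \cite{book}). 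The only point where the paper is more explicit than your sketch is the complex step: rather than invoking the machinery as a black box, it verifies the concrete comparison inequality $|s_{x,y,t,\ka}(\cL_z^{(j)}g-\cL_0^{(j)}g)|\le c|z|\,s_{x,y,t,\ka}(\cL_0^{(j)}g)$ for $g\in\cC_{j,\bbR}$, which is precisely the hypothesis of that theorem; also note that the paper works directly with the ratio $v(\cL^{(j)}g)/\inf(\cL^{(j)}g)$ rather than a two-term Lasota--Yorke inequality, and takes $a_j(g)=g(a)$ (point evaluation) rather than integration against $\nu_j^{(0)}$, which avoids the mild circularity of using an object that is itself produced by the theorem.
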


Relying on this theorem,  Theorem \ref{RPF SDS} follows exactly as in Chapters 4 and 5 \cite{book}. Indeed, the main assumption in Chapter 4 is the existence of families of cones satisfying all the properties described in Theorem \ref{Complex cones Thm}. Using these properties existence of RPF triplets $\la_j(z)$, $h_j^{(z)}$ and $\nu_j^{(z)}$ follows from general contraction properties of complex projective metrics. The analyticity of $\la_j(z)$, $h_j^{(z)}$ and $\nu_j^{(z)}$ in $z$ is guaranteed after the complex cone method is applied successfully since these triplets can be expressed as certain uniform limits of explicit expressions involving the transfer operators $\cL_z^{(j)}$, which are analytic in $z$. 

\begin{proof}[Proof of Theorem \ref{Complex cones Thm}]
Let $\del>0$ be so that $(1+\del)s<1$, where $s$ is defined in (\ref{s sup}), and let $\ka>0$ be so that 
$
\sup_j v(\phi_j)<\ka\del.
$
Consider the real cone
\[
\cC_{j,\bbR}=\{g\in\cH_j:\,g>0\,\text{ and }\,v(g)\leq\ka\inf g\}
\]
and let $\cC_j$ be its canonical complexification  which (see Appendix A in \cite{book}) is given by 
\begin{equation}\label{Complexification}
\cC_j=\{g\in \cH_j:\,\Re\big(\overline{\mu(g)}\nu(g)\big)
\geq0\,\,\,\,\forall\mu,\nu\in\cC_{j,\bbR}^*\}
\end{equation}
where $\cC_{j,\bbR}^*=\{\mu\in\cH_j^*:\,\mu(c)\geq 0\,\,\,\forall\,c\in\cC_{j,\bbR}\}$.

We begin with showing that the complex cones $\cC_j$ and their duals have bounded aperture. First, for any point $a\in \cX_j$ and $g\in\cC_{j,\bbR}$ we have
\[
\|g\|=\sup g+v(g)\leq \inf g+2v(g)\leq (1+2\ka)\inf g\leq (1+2\ka)g(a)
\]
where we used that $g(x)-g(y)\leq (\text{diam}(\cX_j))^\alpha v(g)\leq v(g)$ for any real-valued function on $\cX_j$. We conclude from Lemma 5.2 in \cite{Rug} that for any $g\in\cC_j$ we have
\[
\|g\|\leq 2\sqrt 2(1+2\ka)g(a)
\]
and therefore we can take $a_j(g)=g(a)$ for an arbitrary point $a\in\cX_j$. Next, in order to show that the cone $\cC_j$ has bounded aperture we will apply Lemma A.2.7 from \cite{book} which states that 
\[
\|\nu\|\leq M\nu(\textbf{1}),\,\,\forall\,\nu\in\cC_j^*
\]
if the complex cone $\cC_j$ contains the ball of radius $1/M$ around the constant function $\textbf{1}$. The first step in showing that such a ball exists is the following representation of the cone:
\[
\cC_{j,\bbR}=\cC_{j,\bbR,\ka}=\{g\in\cH_j: \,s_{x,y,t,\ka}(g)\geq 0,\,\,\,\,\forall\,\,(x,y,t)\in\Del_j\}
\]
where $\Del_j$ is the set of triplets $(x,y,t)\in\cX_j\times\cX_j\times\cX_j$ so that $x\not=y$ and 
\[
s_{x,y,t,\ka}(g)=\ka g(t)-\frac{g(x)-g(y)}{\rho_j^\al(x,y)}.
\]
Then (see Appendix A in \cite{book}), we  can write
 \begin{equation}\label{complexification}
\cC_j=\{x\in\cH_j:\,\Re\big(\overline{\mu(x)}\nu(x)\big)\geq0\,\,\,\,\,\forall\mu,
\nu\in \Del_j\}
\end{equation}
since $\Del_j$ generates the dual cone $\cC_{j,\bbR}^*$. Note that by Lemma 4.1 in \cite{Dub2}, 
a canonical complexification $\cC_\bbC$ 
of a real cone $\cC_\bbR$ is linearly convex if there exists
a continuous linear functional which is strictly positive on $\cC_\bbR'=\cC_\bbR\setminus\{0\}$. 

Using (\ref{complexification}), it is enough to find $\ve>0$ which does not depend on $j$ so that for any $g$ of the form $g=\textbf{1}+h$ with $\|h\|<\ve$, and any $(x_i,y_i,t_i)\in\Del_j$ for $i=1,2$, 
\[
\Re(s_{1}(g)\cdot\overline{s_2(g)})\geq 0
\]
where with $s_i=s_{x_i,y_i,t_i,\ka}$. This is indeed enough
 since then we can take $M=1/\ve$. Existence of such $\ve$ is clear since $s_i(g)=\ka-s_i(h)$ and $|s_i(h)|\leq (\ka+1)\|h\|$.  
 The cone $\cC_j$ is linearly convex since the real cone $\cC_{j,\bbR}$ has bounded aperture (so (ii) holds true).
 
 Now we will prove (iii). If $g\in\cH_j$ is real-valued then $g+c_g\in\cC_{j,\bbR}\subset\cC_j$ where $c_g=\max(\sup|g|,\nu(g)/\ka)$. It follows that $g=(g+c_g)-c_g$ is a sum of two members of $\cC_j$ so that 
 \[
 \|g+c_g\|+\|-c_g\|\leq 3(1+\ka^{-1})\|g\|.
 \]
 The proof of (iii) is completing by decomposing complex-valued functions $g$ in $\cH_j$ as $g=g_1+ig_2$ where $g_1,g_2\in\cH_j$ are real-valued. 
 
In order to prove (iv), we will first show that for any $j$,
\begin{equation}\label{Inclusion}
 \cL^{(j)}\cC_{j,\bbR}\subset\cC_{j+1,\bbR,\zeta\ka}
\end{equation}
where $\zeta=(1+\del)s<1$, where $\del$ was specified at the beginning of the proof of Theorem \ref{Complex cones Thm}.
In the setup of Section \ref{Model 1}, this was established for the transfer operator generated by $T$ and $\phi$ in the proof of Theorem 5.1 in \cite{castro}. According to Proposition 5.3 in \cite{castro} these transfer operators are continuous with respect to $C^1$-perturbations of $T$ and $\phi$, and therefore  (\ref{Inclusion}) holds true if $T_j$ and $\phi_j$ lie in a sufficiently small $C^1$-neighborhood of $T$ and $\phi$, respectively. 
In the setup of Section \ref{Model 2} we do not require that the $T_j$'s and the $u_j$'s lie in such a neighborhood, and instead the proof of (\ref{Inclusion}) proceeds similarly to the proof of Theorem 5.1 in \cite{castro}. Fix some $j$ and denote by $(x_i)$ and $(y_i)$ the inverse images of two points $x$ and $y$ under $T_j$, respectively. We have
\begin{eqnarray*}
\frac{|\cL^{(j)}g(x)-\cL^{(j)}g(y)|}{\inf\cL_0^{(j)}g}\\\leq
\frac{|\cL^{(j)}g(x)-\cL^{(j)}g(y)|}{d_j e^{\inf\phi_j}\inf g}\leq
d_j^{-1}\sum_{i=1}^{d_j}e^{\phi_j(x_i)-\inf\phi_j}|g(x_i)-g(y_i)|(\inf g)^{-1}\\+d_j^{-1}\sum_{i=1}^{d_j}|(g(y_i)/\inf g)e^{-\inf\phi_j}|e^{\phi_j(x_i)}-e^{\phi_j(x_i)}|:=I_1+I_2,
\end{eqnarray*}
where $\cL^{(j)}=\cL_0^{(j)}$.
Since $\rho_j(x_i,y_i)\leq L_j\rho_j(x,y)$ for any $1\leq i\leq q_j$ and $\rho_j(x_i,y_i)\leq \sig_j^{-1}\rho_j(x,y)$ for all other preimages, 
\begin{equation*}
I_1\leq \rho_{j+1}^\al(x,y)e^{\ve_j}d_j^{-1}(L_j^\al q_j+(d_j-q_j)\sig_j^{-\al})=\rho_{j+1}^\al(x,y)s\ka
\end{equation*}
where $s$ is defined in (\ref{s sup}), and we used that $|g(x_i)-g(y_i)|\leq v(g)\rho_j^\al(x_i,y_i)\leq\ka\inf g\cdot\rho_j^\al(x_i,y_i)$.

In order to bound $I_2$, we first observe that  $\sup g\leq\inf g+v(g)\leq (1+\ka)\inf g$ and that 
\[
|e^{\phi_j(x_i)}-e^{\phi_j(y_i)}|\leq e^{\max(\phi_j(x_i),\phi_j(y_i))}|\phi_j(x_i)-\phi_j(y_i)|
\leq e^{\inf\phi_j+\ve_j}v(\phi_j)\rho^\al_j(x_i,y_i).
\]
Using these estimates we obtain that 
\[
I_2\leq \rho_{j+1}^{\alpha}(1+\ka)s\cdot\sup_j v(\phi_j).
\]
We conclude that 
\[
v(\cL^{(j)}g)\leq s(\ka+\sup_j v(\phi_j))\inf\cL_0^{(j)}\leq s\ka(1+\del)\inf\cL_0^{(j)}=\zeta\inf\cL_0^{(j)}.
\]
and therefore
\begin{equation}\label{Real inv}
\cL^{(j)}\cC_{j,\bbR,\ka}\subset\cC_{j,\bbR,\zeta\ka}\subset\cC_{j,\bbR,\ka}.
\end{equation}
By Proposition 5.2 in \cite{castro} (see the proof of Proposition 4.3 from there), there exists $d_0$ which depends only on $\ka$ and $\zeta$ so that the real projective diameter of $\cC_{j,\bbR,\zeta\ka}$ as a subset of $\cC_{j,\bbR,\ka}$ does not exceed $d_0$.

We will next prove that  for any $j$, $\ka$, $(x,y,t)\in\Del_j$, $g\in\cC_{j,\bbR}$ and a complex $z$ so that $|z|\leq 1$ we have 
\begin{equation}\label{Comparison Key}
\big|s_{x,y,t,\ka}(\cL_z^{(j)})g-\cL_0^{(j)}g)\big|\leq c|z| s_{x,y,t,\ka}(\cL_0^{(j)}g)
\end{equation}
where $c$ is some constant which does not depend on $j$. After this is established we can apply Theorem A.2.4 from Appendix A in \cite{book}  and obtain item (iv).

We begin with the following simple result/observation: let $A$ and $A'$ be complex numbers, $B$ and $B'$ be real numbers, and let $\ve_1>0$ and $\zeta\in(0,1)$ so that
\begin{itemize}
\item
$B>B'$
\item
$|A-B|\leq\ve_1B$
\item
$|A'-B'|\leq\ve_1 B$
\item
$|B'/B|\leq\zeta$.
\end{itemize}
Then 
\[
\left|\frac{A-A'}{B-B'}-1\right|\leq 2\ve_1(1-\zeta)^{-1}.
\]
The proof of this results is elementary, just write
\[
\left|\frac{A-A'}{B-B'}-1\right|\leq\left|\frac{A-B}{B-B'}\right|+
\left|\frac{A'-B'}{B-B'}\right|\leq \frac{2B\ve_1}{B-B'}=\frac{2\ve_1}{1-B'/B}.
\]

Fix some nonzero $g\in\cC_{j,\bbR}$ and $(x,y,t)\in\Del_{j+1}$. We want to apply the above results with $A=\ka\cL_z^{(j)} g(t)$, 
\begin{equation*}
 B=\ka\cL_0^{(j)}g(t),\, A'=\frac{\cL_z^{(j)} g(x)-\cL_z^{(j)} g(y)}{\rho_j^\al(x,y)}\,\,
\text{ and }\,\,B'=
\frac{\cL_0^{(j)} g(x)-\cL_0^{(j)} g(y)}{\rho_j^\al(x,y)}.
\end{equation*}
We begin with noting that $B>B'$ since the function $\cL_0^{(j)}g$ is a nonzero member of the cone $\cC_{j,\bbR,\zeta\ka}$. 
Notice that when $|z|\leq 1$ we have
\begin{eqnarray*}
|A-B|=\ka|\cL_z^{(j)} g(t)-\cL_0^{(j)}g(t)|=\ka|\cL_0^{(j)}(g(e^{zu_j}-1))(t)|\\\leq 
\ka\|e^{zu_j}-1\|_\infty\cL_0^{(j)} g(t)\leq |z|e^{\|u_j\|_\infty}\|u_j\|_\infty B\leq C|z|B
\end{eqnarray*}
for some constant $C>0$, where we used that $\sup\|u_j\|_\infty<\infty$. Next, we have
\[
|B'/B|\leq \zeta\inf\cL_0^{(j)}g/B\leq\zeta<1
\]
where we  used that $\cL_0^{(j)}g$ is a nonzero member of the cone $\cC_{j,\bbR,\zeta\ka}$.
Finally, we estimate the difference $|A'-B'|$. For each $a,b\in\cX_j$ we define
\[
\Del_{a,b}(z)=e^{\phi_j(a)}(e^{zu_j(a)}-1)g(a)-e^{\phi_j(b)}(e^{zu_j(b)}-1)g(b).
\] 
Denote again by $x_i$ and $y_i$ the preimages of $x$ and $y$ under $T_j$, respectively, where $1\leq i\leq d_j$. Then 
\[
\rho_j^\alpha(x,y)(A'-B')=\sum_{i=1}^{d_j}\Del_{x_i,y_i}(z).
\]
We first have 
\[
|\Del_{a,b}(z)|=|\Del_{a,b}(z)-\Del_{a,b}(0)|\leq|z|\sup_{|q|\leq |z|}|\Del'_{a,b}(q)|
\]
where $\Del'_{a,b}(\cdot)$ is the gradient of $\Del_{a,b}(\cdot)$.
Next, since 
\[
|e^{\phi_j(a)}-e^{\phi_j(b)}|\leq (e^{\phi_j(a)}+e^{\phi_j(b)})v(\phi_j)\rho^\al_j(a,b)
\]
using that the sequence $(L_j)$ appearing in Assumption \ref{Pairing ass} is bounded, 
we obtain that for any $q$ such that $|q|\leq1$ and any $1\leq i\leq d_j$,
\[
|\Del'_{x_i,y_i}(q)|\leq CL_j(e^{\phi_j(x_i)}+e^{\phi_j(y_i)})\|g\|\rho_j^\al(x,y)
\]
where $C>0$ is some constant. We conclude that  there exists a constant $C>0$  so that for any $j\in\bbZ$ and $z\in\bbC^d$ with $|z|\leq1$,
\[
|A'-B'|\leq C|z|\|g\|_\infty(\cL_0^{(j)}\textbf{1}(x)+\cL_0^{(j)}\textbf{1}(y))\leq 
C_1|z|\inf g
\]
where we used that $\sup_{j}\|\cL_0^{(j)}\textbf{1}\|_\infty<\infty$. 
Since $\|\phi_j\|$ is bounded in $j$ there exists a constant $C_2>0$ so that
$\inf g\leq C_2\cL_0^{(j)}g(t)=C_2B$  for any $j$.  This completes the proof of (\ref{Comparison Key}). Applying Theorem A.2.4 in Appendix A of \cite{book} we complete the proof of (iv).
\end{proof}

%\subsection{Additional properties of the RPF triplets}

\section{Uniform control over the norms of $\cL_{it}^{j,n}$ and the covariance of $S_{j,n}$}

\subsection{Exponential decay of the norms}
We consider here the normalized operators  $\tilde\cL_z^{(j)}$ given by  $\tilde\cL_z^{(j)}(g)=\cL_z^{(j)}(gh_j)/h_{j+1}\la_j$ where $\la_j=\la_j(0)$ and $h_j=h_j^{(0)}$. Our proof of the ASIP will require the following
\begin{proposition}\label{Norm Prop}
Suppose that (\ref{C}) holds true. Then
there exist constants $r_0,c,C>0$ so that for any $t\in\mathbb R^d$ with $|t|\leq r_0$ and any $j$ and $n\geq0$  we have
\begin{equation}\label{Norm deacy}
\|\tilde \cL_{it}^{j,n}\|\leq Ce^{-c|t|^2n}.
\end{equation}
\end{proposition}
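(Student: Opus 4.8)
The plan is to reduce the claimed operator‑norm estimate to a Gaussian‑type bound on the ratio of leading eigenvalues $\la_{j,n}(it)/\la_{j,n}(0)$, and then to obtain that bound by Taylor expanding $\log|\la_{j,n}(it)|$ at $t=0$, using that its Hessian is --- up to a uniformly bounded error --- the covariance matrix appearing in (\ref{C}). Throughout, $\la_{j,n}(z)=\la_j(z)\cdots\la_{j+n-1}(z)$ and I write $\la_j=\la_j(0)$, $h_j=h_j^{(0)}$, $\nu_j=\nu_j^{(0)}$.

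\emph{Reduction to the eigenvalues.} Iterating $\tilde\cL_z^{(j)}g=\cL_z^{(j)}(gh_j)/(h_{j+1}\la_j)$ gives $\tilde\cL_{it}^{j,n}g=\cL_{it}^{j,n}(gh_j)/(h_{j+n}\la_{j,n}(0))$. Applying Theorem \ref{RPF SDS} with parameter $z=it$ (legitimate once $it\in U$) and multiplying (\ref{Exponential convergence}) through by $|\la_{j,n}(it)|$ yields $\|\cL_{it}^{j,n}f-\la_{j,n}(it)\nu_j^{(it)}(f)h_{j+n}^{(it)}\|\le A|\la_{j,n}(it)|\,\|f\|\,\del^n$, hence by the uniform bounds (\ref{UnifBound}) and $\del<1$ we get $\|\cL_{it}^{j,n}f\|\le C|\la_{j,n}(it)|\,\|f\|$. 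Since $h_j$ and $1/h_{j+n}$ have $\cH$‑norms bounded uniformly in $j$ (by (\ref{UnifBound}) and $\min|h_j|\ge c$), multiplication by them is uniformly bounded on $\cH$, so
\[
\|\tilde\cL_{it}^{j,n}\|\le C\Big|\frac{\la_{j,n}(it)}{\la_{j,n}(0)}\Big|,\qquad it\in U,
\]
and it remains to bound this ratio.

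\emph{Taylor expansion of the eigenvalue.} Shrinking $U$ if necessary, $\la_j(\cdot)$ is non‑vanishing, analytic and uniformly bounded above and below on $U$ (Theorem \ref{RPF SDS}(ii)), so $\Pi_j:=\log\la_j$ is analytic on $U$ with $\Pi_j(0)\in\bbR$ and, by Cauchy's estimates, all derivatives of $\Pi_j$ bounded uniformly in $j$. Writing $\Pi_{j,n}=\sum_{k=0}^{n-1}\Pi_{j+k}$ we have $\log\la_{j,n}=\Pi_{j,n}$, and each derivative of $\Pi_{j,n}$ of a fixed order is $O(n)$, uniformly in $j$. Since $\la_{j,n}$ is real and positive on $\bbR^d$, the vector $\nabla\Pi_{j,n}(0)$ is real and the matrix $H_{j,n}:=\nabla^2\Pi_{j,n}(0)$ is real symmetric; Taylor expanding $t\mapsto\Pi_{j,n}(it)$ gives
\[
\log\big|\la_{j,n}(it)\big|=\Re\,\Pi_{j,n}(it)=\Pi_{j,n}(0)-\tfrac12 H_{j,n}t\cdot t+O(n|t|^3),
\]
where the linear term is purely imaginary (so contributes nothing to the real part) and the remainder is $O(n|t|^3)$ uniformly in $j,n$ by the uniform third‑derivative bound.

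\emph{Hessian lower bound, conclusion, and main obstacle.} A standard computation --- differentiating the relations (\ref{RPF deter equations-General}) twice and using the uniform exponential decay of correlations (\ref{ExpDec}) to estimate the resulting (telescoping) sums --- shows that $H_{j,n}=\text{Cov}_{\mu_j}(S_{j,n})+E_{j,n}$ with $\|E_{j,n}\|\le C_0$ uniformly in $j,n$. Together with (\ref{C}) this gives $H_{j,n}t\cdot t\ge c_1 n|t|^2$ for all $t\in\bbR^d$ and all $n\ge N_0$, whence by the previous display $\log|\la_{j,n}(it)/\la_{j,n}(0)|\le -\tfrac12 c_1 n|t|^2+Cn|t|^3\le -\tfrac14 c_1 n|t|^2$ once $|t|\le r_0$ with $r_0$ small enough (and $it\in U$). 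Combined with the reduction step, $\|\tilde\cL_{it}^{j,n}\|\le Ce^{-(c_1/4)n|t|^2}$ for $n\ge N_0$ and $|t|\le r_0$; for the finitely many $n<N_0$ one uses $\sup_j\sup_{|t|\le r_0}\|\tilde\cL_{it}^{(j)}\|<\infty$ (the operators are analytic and uniformly bounded on $U$), so $\|\tilde\cL_{it}^{j,n}\|$ is bounded and this is absorbed into the constant since $n|t|^2\le N_0 r_0^2$. Taking $c=c_1/4$ finishes the proof. The main obstacle is the comparison $H_{j,n}=\text{Cov}_{\mu_j}(S_{j,n})+O(1)$ with error uniform in $j$ and $n$: this is precisely where the uniform spectral gap / decay of correlations in Theorem \ref{RPF SDS}(iii) is used, and it is what converts the hypothesis (\ref{C}) into a uniform quadratic lower bound for the Hessian; a secondary technical point is the uniform‑in‑$j$ analyticity and two‑sided boundedness of $\la_j(\cdot)$ on a fixed neighborhood of $0$, needed to make the cubic remainder genuinely $O(n|t|^3)$ with a $j,n$‑independent constant.
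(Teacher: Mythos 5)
Your proposal is correct and follows the same overall architecture as the paper's proof: reduce the operator bound to a bound on $|\la_{j,n}(it)/\la_{j,n}(0)|$ via the uniform RPF estimates, Taylor expand $\Pi_{j,n}=\log\la_{j,n}$ to second order with a uniformly $O(n|t|^3)$ remainder, identify the Hessian of $\Pi_{j,n}$ at $0$ with $\text{Cov}_{\mu_j}(S_{j,n})$ up to a uniformly bounded error, and then invoke (\ref{C}). The only genuine divergence is in how the comparison $\text{Hessian}(\Pi_{j,n})(0)=\text{Cov}_{\mu_j}(S_{j,n})+O(1)$ is obtained. You propose to differentiate the RPF relations (\ref{RPF deter equations-General}) twice and control the resulting telescoping correlation sums via (\ref{ExpDec}); this works but requires some bookkeeping of double sums of correlations. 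The paper instead writes $\bbE e^{zS_{j,n}}=\mu_{j+n}(\cL_z^{j,n}\textbf{1})$, uses (\ref{Exponential convergence}) to obtain the uniform bound $|\ln\bbE e^{zS_{j,n}}-\Pi_{j,n}(z)|\leq c_2$ on a fixed complex polydisc, and then extracts the Hessian comparison for free by Cauchy's estimates applied to this bounded analytic difference --- a slicker route that avoids any direct manipulation of correlation sums. A second cosmetic difference: the paper normalizes so that $\la_j(0)=1$, $h_j^{(0)}\equiv\textbf{1}$ and centers $u_j$ so that the gradient of $\Pi_{j,n}$ vanishes, whereas you keep the gradient and simply observe that it is real so that the linear term in $\Pi_{j,n}(it)$ is purely imaginary and drops out of $\Re\Pi_{j,n}(it)$; both are equivalent. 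Overall you have identified the right mechanism and the right key obstacle, you just choose a more hands-on (and less fully carried-out) method for the Hessian--covariance comparison than the Cauchy-estimate shortcut the paper actually uses.
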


\begin{proof}
Without the loss of generality, we assume here that $\la_j(0)=1$ and $h_j^{(0)}\equiv \textbf{1}$, where $\textbf{1}$ is the function taking the constant value $1$ (we will use this notation regardless of the space this function is defined on). Otherwise we can just replace $\cL_z^{(j)}$ with $\tilde\cL_z^{(j)}$ (it is easy to find appropriate RPF triplets for $\tilde\cL_{z}^{(j)}$ using the ones corresponding to $\cL_z^{(j)}$, see for instance the arguments at the beginning of Section 4 in \cite{SeqRPF}). We will also assume without the loss of generality that 
\[
\mu_j(S_{j,n})=\sum_{k=0}^{n-1}\mu_{j+k}(u_{j+k})=0
\]
since otherwise we can replace $u_k$ with $u_k-\mu_k(u_k)$ for any $k$.

In these circumstances, it is clear that there exists $r>0$ so that on $\{z\in\bbC^d:\,|z|<r\}$ we can define functions $\Pi_k(z)$ which are uniformly bounded in $z$ and $k$ so that $\Pi_k(0)=0$ and $\la_k(z)=e^{\Pi_k(z)}$. Observe next that for each $j$ and $n$ we have
\begin{equation}\label{Basic rel}
\bbE e^{zS_{j,n}}=\mu_{j+n}(\cL_z^{j,n}\textbf{1})
\end{equation}
where $S_{j,n}=\sum_{k=j}^{j+n-1}u_j\circ T_j^n(\textbf{x})$ and $\textbf{x}$ is distributed according to $\mu_j$.
Using (\ref{Basic rel}) and (\ref{Exponential convergence}) we derive that when $r$ is sufficiently small then
\[
\big|\ln \bbE e^{zS_{j,n}}-\sum_{k=0}^{n-1}\Pi_{j+k}(z)\big|\leq c_2
\]
where $c_2$ is some constant which does not depend on $j,n$ and $z$. Since the expression inside the absolute value is analytic, we conclude by taking the second derivatives at $z=0$ and using the Cauchy integral formula that 
\begin{equation}\label{CovDiff}
\left|\text{Cov}(S_{j,n})-\text{Hessian}(\Pi_{j,n})|_{z=0}\right|\leq c
\end{equation}
where $\Pi_{j,n}(\cdot)=\sum_{k=0}^{n-1}\Pi_{j+k}(\cdot)$ and $c$ is some constant. On the other hand, 
it is also clear from (\ref{Exponential convergence}) that when $t\in\bbR^d$ has a sufficiently small length then for any $j$ and $n$ we have
\[
\|\cL_{it}^{j,n}\|\leq Ce^{\Re(\Pi_{j,n}(it))}.
\]
Next, using the relations in (\ref{RPF deter equations-General}), it follows that the gradient of $\Pi_k$ at $z=0$ equals $\mu_k(u_k)$ which we have assumed is $0$. Using now (\ref{CovDiff}) and the second order Taylor expansion of $\Pi_{j,n}$ around $0$, we conclude that there exist constants $r_0>0$ and $c>0$ so that for any $t\in\bbR^d$ with $|t|<r_0$ we have
\[
\left|\Pi_{j,n}(it)+\frac12\text{Hessian}(\Pi_{j,n})t\cdot t\right|\leq c|t|^3n.
\]
Using (\ref{CovDiff}) we get that 
\[
\left|\Pi_{j,n}(it)+\frac12\text{Cov}(S_{j,n})t\cdot t\right|\leq c|t|^3n+c_2|t|^2.
\]
By (\ref{C}) we have $\inf_j\text{Cov}(S_{j,n})t\cdot t\geq C n|t|^2$ for some $C>0$ and all sufficiently large $n$. Therefore, taking $r_0$ sufficiently to be small we deduce that if $|t|<r_0$ then for any $j$ and  sufficiently large $n$ we have 
\[
\Re(\Pi_{j,n}(it))\leq -c|t|^2 n
\]
where $c>0$ is some constant. We conclude that for such $t$'s we have
\begin{equation}\label{Norm deacy.1}
\|\tilde \cL_{it}^{j,n}\|=\|\cL_{it}^{j,n}\|\leq Ce^{-c|t|^2n}.
\end{equation}
\end{proof}

%It is possible to show that the measures $\mu_0$ satisfy a sequential version of the Gibbs property

\subsection{Strong stability and uniform  growth rate of the covariance matrix}\label{CovSec}
Let $T$, $\phi$ and $u$ be so that all of our conditions hold true with the ``sequence" $T_j=T$, $\phi_j=\phi$ and $u_j=u$. 
Let $\mu$ be the Gibbs measure corresponding to $T$ and $\phi$ which is obtained in Theorem \ref{RPF SDS}. In the setup of \cite{castro} $\mu$ is the unique equilibrium state corresponding to $T$ and $\phi$. We assume here without loss of generality that $\int u d\mu =0$. Using  the exponential decay of correlations (\ref{ExpDec}), it follows that the asymptotic covariance matrix 
\[
S^2=\lim_{n\to\infty}\frac 1n\text{Cov}_{\mu}(\sum_{k=0}^{n-1}u\circ T^k)
\]
exists and that it is positive definite if and only if there exists a non-zero $v\in\bbR^d$ so that the function $v\cdot u$ admits an $L^2(\mu)$ co-boundary representation $v\cdot u=r\circ T-r$. For each $z\in\mathbb C^d$ denote by $\cL_z$ the transfer operator generated by $T$ and the potential $e^{\phi+zu}$.

We have the following strong stability type result for the covariance of $S_{j,n}$:
\begin{proposition}\label{CovProp}
For any sufficiently small $r_0>0$ and $\del_0>0$ there exists $\ve>0$ with the following property: if 
\begin{equation}\label{Ve}
\sup_{j}\sup_{|z|\leq r_0}\|\cL^{(j)}_z-\cL_z\|\leq\varepsilon
\end{equation} 
then 
\[
\sup_j\left\|\text{Cov}_{\mu_j}(S_{j,n})-\text{Cov}_{\mu}(\sum_{k=0}^{n-1}u\circ T^k)\right\|\leq\del_0 n.
\]
In particular, when $S^2$ is positive definite then
there exists a constant $c>0$ so that for any sufficiently large $n$ and any $v\in\mathbb R^d$ we have
\[
\inf_{j}\text{Cov}_{\mu_j}(S_{j,n}) v\cdot v\geq cn|v|^2.
\]
\end{proposition}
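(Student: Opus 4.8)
The plan is to compare the covariance matrices term-by-term by expressing each of them via the appropriate RPF data and then exploiting the analytic dependence supplied by Theorem \ref{RPF SDS}. First I would recall, exactly as in the proof of Proposition \ref{Norm Prop}, that for $|z|\le r_0$ with $r_0$ small one can write $\la_j(z)=e^{\Pi_j(z)}$ and $\la(z)=e^{\Pi(z)}$ with $\Pi_j,\Pi$ uniformly bounded and analytic on $\{|z|<r\}$, that $\bbE_{\mu_j} e^{z S_{j,n}}=\mu_{j+n}(\cL_z^{j,n}\textbf{1})$ (the identity (\ref{Basic rel})), and the analogous identity for the stationary system. Combining this with (\ref{Exponential convergence}) gives
\[
\Big|\ln\bbE_{\mu_j} e^{z S_{j,n}}-\Pi_{j,n}(z)\Big|\le c_2,\qquad \Pi_{j,n}(z)=\sum_{k=0}^{n-1}\Pi_{j+k}(z),
\]
uniformly in $j,n,z$, and likewise $|\ln\bbE_\mu e^{zS_n}-n\Pi(z)|\le c_2$. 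Taking second derivatives at $z=0$ via the Cauchy integral formula (again as in (\ref{CovDiff})) reduces the claim to showing that $\|\mathrm{Hessian}(\Pi_{j,n})|_0-n\,\mathrm{Hessian}(\Pi)|_0\|\le\tfrac12\del_0 n$ for all large $n$, which in turn follows from
\[
\sup_j\big\|\mathrm{Hessian}(\Pi_{j})|_0-\mathrm{Hessian}(\Pi)|_0\big\|\le \tfrac12\del_0 .
\]

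The second step is to deduce this last uniform bound on the Hessians from the hypothesis (\ref{Ve}) on the transfer operators. The point is that, for each fixed $j$, the triplet $(\la_j(z),h_j^{(z)},\nu_j^{(z)})$ associated with the sequence $\cL_z^{(j)},\cL_z^{(j+1)},\dots$ depends only on that sequence of operators, and all of this data is obtained, through the complex-cone construction of Theorem \ref{Complex cones Thm}, as a uniform limit of explicit expressions built from the operators $\cL_z^{(j+m)}$ that are analytic in $z$ on the common neighborhood $U\supset\{|z|<r_0\}$, with bounds (\ref{UnifBound}) depending only on the initial parameters. I would argue that the map sending a sequence of operators (in the relevant operator norm, say $\sup_m\sup_{|z|\le r_0}\|\cdot\|$) to the number $\la_j(z)$, and hence to $\Pi_j(z)$ and to $\mathrm{Hessian}(\Pi_j)|_0$ (the latter obtained from $\Pi_j$ by a Cauchy integral over a fixed small circle, so controlled by $\sup_{|z|\le r_0}|\Pi_j(z)|$), is \emph{continuous} at the constant sequence $(\cL_z)$, uniformly over which index $j$ we look at. Concretely: if (\ref{Ve}) holds with $\ve$ small then $\cL_z^{j,n}$ and $\cL_z^{0,n}$ (the $n$-fold stationary product) are uniformly close for bounded $n$, the cone contraction gives the exponential convergence (\ref{Exponential convergence}) with rates uniform in everything, and a standard $\ve/3$ argument — choose $n_0$ large so the tails are small, then take $\ve$ small enough to control the first $n_0$ factors — yields $\sup_j\sup_{|z|\le r_0}|\Pi_j(z)-\Pi(z)|\le\del_0'$, whence the Hessian bound. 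This is the part that requires the strong stability of the whole cone machinery, i.e. that the constants $A,\del,C,c$ in Theorem \ref{RPF SDS} are genuinely uniform over the perturbed systems.

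The final step is routine: take $S^2$ positive definite, so $\mathrm{Cov}_\mu(\sum_{k=0}^{n-1}u\circ T^k)\ge c_0 n|v|^2$ for some $c_0>0$ and all large $n$ by definition of $S^2$ as the limiting normalized covariance; then by the just-proved estimate with $\del_0<c_0/2$,
\[
\mathrm{Cov}_{\mu_j}(S_{j,n})v\cdot v\ \ge\ (c_0-\del_0)n|v|^2\ \ge\ \tfrac{c_0}{2}\,n|v|^2
\]
for all $j$ and all sufficiently large $n$, which is the asserted uniform lower bound with $c=c_0/2$.

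I expect the main obstacle to be making the second step rigorous with \emph{uniformity in $j$}: one must verify that the complex-cone construction, when run on two sequences of operators that are uniformly close in the $|z|\le r_0$ operator norm, produces leading eigenvalues (and their logarithms, and the Hessians of those logarithms at $0$) that are uniformly close — including checking that the neighborhood $U$ of $0$ on which everything is analytic, and all the stability constants, can be chosen once and for all, independently of the index $j$ and of which close-by sequence one perturbs to. Granting the uniform bounds already recorded in Theorems \ref{RPF SDS} and \ref{Complex cones Thm}, this is a matter of tracking constants through the contraction estimates, but it is the only genuinely non-bookkeeping point.
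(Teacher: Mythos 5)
Your proposal takes essentially the same route as the paper: both reduce the covariance comparison to a bound on $\sup_j\sup_{|z|\le r_0}|\Pi_j(z)-\Pi(z)|$ via the identity $\bbE_{\mu_j} e^{z S_{j,n}}=\mu_{j+n}(\cL_z^{j,n}\textbf{1})$, the uniform exponential convergence, and a Cauchy-integral control of the Hessians (this is exactly what the paper packages into its inequality (\ref{CovDiff})), and both then invoke strong stability of the RPF triplets under a uniform-in-$j$ perturbation of the transfer operators. The only difference is one of exposition: the paper delegates the strong-stability step to the (vector-valued extension of the) proof of Theorem 2.8 in \cite{SeqRPF}, whereas you sketch it directly via an $\varepsilon/3$ argument based on the uniformity of the cone-contraction constants — which is indeed the substance of what that cited proof does.
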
 
When $\cX_j=M$ are all the same  Riemannian manifold,  the maps $T_j$ satisfy the conditions from \cite{castro} and they lie in  $C^1$-ball of a single map satisfying these conditions, and the  functions $\phi_j$ and $u_j$ lie in some $C^1$-ball  around $\phi$ and $u$, respectively, then (\ref{Ve}) holds true with some $\ve$ which converges to $0$ when the radius of the latter $C^1$-ball converge to $0$ (see Proposition 5.3 in \cite{castro}). Of course, in these circumstances 
we consider the norm $\|g\|=\sup|g|+\sup|Dg|$.

Another example are intervals maps with finite number of monotonicity intervals which do not depend on $j$, where on each one of them each $T_j$ and $T$ are either expanding or contracting. If each $T_j$ is obtained from $T$ by  perturbing each inverse branch of $T$ in some H\"older norm, and $\phi_j$ and $u_j$ are small perturbations of $\phi$ and $u$ in this norm, then (\ref{Ve}) will hold true in the appropriate H\"older norm. Similar examples can be given for maps whose inverse branches are defined on certain rectangular regions in $\bbR^s$ for $s>1$. We also refer the readers to Section \ref{Examples 2}, in which condition (\ref{Ve}) is discussed in some of the examples given there.

\begin{proof}[Proof of Proposition \ref{CovProp}]
First, we claim that the arguments in the proof of Theorem 2.8 in \cite{SeqRPF} carry on exactly in the same way for vector valued functions $u_j$ (in \cite{SeqRPF} we considered scalar functions $u_j$). Indeed, these arguments only relied on the explicit limiting expressions of $\la_j(z),h_j^{(z)}$ and $\nu_j^{(z)}$ together with the analyticity of $z\to\cL_z^{(j)}$, two ingredients that we also have for vector valued functions $u_j$. 
It follows that the RPF triplets are strongly stable in the following sense. 
If $T_j,\phi_j$ and $u_j$ and $T_{1,j},\phi_{1,j}$ and $u_{1,j}$ are two triplets of sequences which satisfy the Assumptions from Section \ref{Sec2} with the same sequences $L_j,\sig_j,\ve_j$ etc.  then there exists a neighborhood $U_0\subset\bbC^d$ of $0$ so that for any $\delta>0$ there is an $\ve>0$ with the following property: if 
\begin{equation*}
\sup_{j}\sup_{z\in U_0}\|\cL^{(j)}_z-\cL_{1,z}^{(j)}\|\leq\varepsilon
\end{equation*} 
where $\cL_{1,z}^{(j)}$ are the transfer operators corresponding to the map $T_{1,j}$ and the potential $\phi_{1,j}+zu_{1,j}$, 
then 
\[
\sup_j\sup_{z\in U_0}\max\big(|\la_j(z)-\la_{1,j}(z)|,\|h_j^{(z)}-h_{1,j}^{(z)}\|,\|\nu_j^{(z)}-\nu_{1,j}^{(z)}\|\big)<\del.
\]
Here $\la_{1,j}(z)$, $h_{1,j}^{(z)}$ and $\nu_{1,j}^{(z)}$ are the triplets obtained in Theorem \ref{RPF SDS} for the transfer operators $\cL_{1,z}^{(j)}$.
In particular, it follows that 
\[
\sup_j\sup_{z\in U_0}|\Pi_{j}(z)-\Pi_{1,j}(z)|\leq c\delta
\]
where $c>0$ is some constant and $\Pi_{1,j}(z)$ is the pressure function corresponding to $\la_{1,j}(z)$. Applying this with $T_j=T$, $\phi_j=\phi$ and $u_j=u$ and using (\ref{CovDiff}) we complete the proof of the proposition.
\end{proof}

%remark that we only have examples in which we have linear growth rate...

\section{An almost sure vector-valued invariance principle}\label{Sec5}

%I can not do here the case theta<1, so I can just use the result I have with Davor...

Let $A_1,A_2,...$ be a sequence of $\bbR^d$-valued random vectors defined on some probability space$(\Om,\cF,P)$.
We recall the main  assumption from \cite{GO}, which was denoted there by (H). There exists $\ve_0>0$ and $C,c>0$ such that for any $n,m>0$, $b_1<b_2<...<b_{n+m+k}$, $k>0$ and $t_1,...,t_{n+m}\in\bbR^d$ with $|t_j|\leq\ve_0$, we have
\begin{eqnarray*}
\Big|\bbE\big(e^{i\sum_{j=1}^nt_j(\sum_{\ell=b_j}^{b_{j+1}-1}A_\ell)+i\sum_{j=n+1}^{n+m}t_j(\sum_{\ell=b_j+k}^{b_{j+1}+k-1}A_\ell)}\big)\\
-\bbE\big(e^{i\sum_{j=1}^nt_j(\sum_{\ell=b_j}^{b_{j+1}-1}A_\ell)}\big)\cdot\bbE\big(e^{i\sum_{j=n+1}^{n+m}t_j(\sum_{\ell=b_j+k}^{b_{j+1}+k-1}A_\ell)}\big)\Big|\\\leq C(1+\max|b_{j+1}-b_j|)^{C(n+m)}e^{-ck}.
\end{eqnarray*}

Our main results rely on the following modification of Theorem 1.3 in \cite{GO} which was proved
in \cite{YeorDavor}, Theorem 7:
\begin{theorem}\label{Gouzel Thm}
Suppose that $A_1,A_2,...$ is a sequence of centered random $d$ dimensional vectors satisfying Assumption (H) which is bounded in $L^p$ for some $p>4$. Assume, in addition, that there exist constants $a>0$ and $b\in\bbN$ so that for any $n\geq b$ and $v\in\mathbb R^d$
\begin{equation}\label{Go1}
\text{Cov}(\sum_{j=1}^{n}A_j)v\cdot v\geq an.
\end{equation}
Then for any $\ve>0$, there exists a coupling of $(A_j)_j$ with a sequence of independent centered Gaussian random vectors $(B_j)_j$ so that almost-surely as $n\to\infty$,
\[
\Big|\sum_{j=1}^{n}(A_j-B_j)\Big|=o(n^{a_p+\ve})
\]
where $a_p=\frac{p}{4(p-1)}=\frac 14+\frac1{4(p-1)}$.
 Moreover, there exists a constant $C>0$ so that for any unit vector $v\in\mathbb R^d$,
\begin{equation}\label{Var est}
\Big\|\sum_{j=1}^n A_j\cdot v\Big\|_{L^2}-Cn^{a_p+\ve}\leq \Big\|\sum_{j=1}^n B_j\cdot v\Big\|_{L^2}\leq \Big\|\sum_{j=1}^n A_j\cdot v \Big\|_{L^2}+Cn^{a_p+\ve}.
\end{equation}
\end{theorem}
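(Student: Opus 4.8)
The plan is to run the spectral‑method scheme of Gou\"ezel \cite{GO}, with the single modification that the hypothesis that $\frac1n\text{Cov}(\sum_{j=1}^nA_j)$ converges is replaced by the uniform lower bound (\ref{Go1}), so that the target Gaussian is built from the actual block covariances rather than from a limiting matrix; the complete argument in this form is \cite{YeorDavor}, Theorem~7, and I only sketch its skeleton. First I would partition $\bbN$ into consecutive blocks $I_1,I_2,\dots$ of lengths $L_k\asymp k^\gamma$ ($\gamma>0$ to be fixed later) separated by gaps of length $t_k\asymp(\log k)^2$, and set $Y_k=\sum_{j\in I_k}A_j$. Peeling off one block at a time across the gaps, Assumption (H) shows that the characteristic function of $(Y_1,\dots,Y_m)$ differs from the product of the individual characteristic functions by at most $\sum_{k\le m}C(\max_{\ell\le m}L_\ell)^{Cm}e^{-ct_k}$, which is super‑polynomially small with a polylogarithmic gap, so the $Y_k$ are ``almost independent''; the same mechanism gives exponential decay of $|\text{Cov}(A_i,A_j)|$ in $|i-j|$, hence $\text{Cov}(\sum_{j=1}^nA_j)\le Cn$, which together with (\ref{Go1}) (after merging a bounded number of neighbouring blocks if needed) yields $cL_k\,\mathrm{Id}\le\Sigma_k:=\text{Cov}(Y_k)\le CL_k\,\mathrm{Id}$.

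Next I would show that each block sum is quantitatively Gaussian: subdivide $I_k$ into $N_k$ sub‑blocks, use (H) again to make the sub‑block sums almost independent, Taylor‑expand the logarithm of the characteristic function of each sub‑block sum to second order (the cubic remainder being controlled by third‑moment bounds for the sub‑block sums, obtained from a Rosenthal‑type inequality that in this setting follows from (H) together with the $L^p$ assumption, $p>4$), and recombine. This gives $\bigl|\bbE e^{it\cdot Y_k}-e^{-\frac12\Sigma_k t\cdot t}\bigr|\le\eta_k$ for $|t|\le\ve_0$, with $\eta_k$ a fixed negative power of $L_k$.

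Then I would feed the outputs of the previous two steps into the conditional quantile (Strassen--Dudley type) coupling used in \cite{GO} and \cite{YeorDavor}: on an enlarged probability space one constructs independent centered Gaussians $B^{(k)}\sim N(0,\Sigma_k)$ with $\bigl\|\,|Y_k-B^{(k)}|\,\bigr\|_{L^q}$ bounded by a fixed power of $\eta_k$ plus the decoupling error, the bound $\|\Sigma_k\|\asymp L_k$ being what turns the characteristic‑function estimate into an $L^q$ bound on the coupling distance. Writing $B^{(k)}=\sum_{j\in I_k}B_j$ with i.i.d.\ summands $B_j\sim N(0,\Sigma_k/|I_k|)$ and setting $B_j=0$ for $j$ in the gaps produces independent centered Gaussians $(B_j)_j$; summing the per‑block $L^q$ errors, the gap contributions $\sum_{j\in\text{gaps},\,j\le n}A_j$, and the $L^p$‑tail contributions, and choosing $\gamma$ optimally against $p$, gives $\bigl\|\sum_{j=1}^n(A_j-B_j)\bigr\|_{L^q}\le Cn^{a_p+\ve/2}$ with $a_p=\frac{p}{4(p-1)}$. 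The almost‑sure bound $o(n^{a_p+\ve})$ follows from this moment estimate by Borel--Cantelli along a geometric subsequence together with monotonicity of partial maxima, and (\ref{Var est}) is immediate from $\|R_n\|_{L^2}\le\|R_n\|_{L^q}\le Cn^{a_p+\ve}$, where $R_n:=\sum_{j=1}^n(A_j-B_j)$, since $\sum_{j=1}^nA_j\cdot v=\sum_{j=1}^nB_j\cdot v+R_n\cdot v$ and the triangle inequality in $L^2$ gives $\bigl|\,\|\sum A_j\cdot v\|_{L^2}-\|\sum B_j\cdot v\|_{L^2}\,\bigr|\le\|R_n\|_{L^2}$.

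The hard part will be the Gaussian approximation of the blocks in the non‑stationary regime: one must establish a central limit theorem with an effective rate for the individual block sums with no fixed target covariance and with only the characteristic‑function decoupling of (H) (rather than classical mixing) available, while keeping the block covariances uniformly non‑degenerate on the scale of the block length --- this is exactly where the lower bound (\ref{Go1}) takes the place of the existence of $\lim\frac1n\text{Cov}(\sum_{j=1}^nA_j)$, and where the argument must depart from \cite{GO}; all remaining details are as in \cite{YeorDavor}, Theorem~7.
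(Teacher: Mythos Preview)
The paper does not prove Theorem~\ref{Gouzel Thm} at all: it is stated as a black-box input, with the proof deferred entirely to \cite{YeorDavor}, Theorem~7 (``which was proved in \cite{YeorDavor}''). Your proposal correctly identifies this and goes further by sketching the skeleton of that external proof --- blocking with gaps, decoupling via (H), Berry--Esseen type Gaussian approximation on blocks with the \emph{actual} block covariances $\Sigma_k$ in place of a limiting matrix, conditional coupling, and Borel--Cantelli --- which is indeed the architecture of Gou\"ezel's method and the modification required when only (\ref{Go1}) is assumed. So there is nothing in the paper to compare against; your outline is a faithful summary of the cited argument, and the paper's own ``proof'' is simply the citation.
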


We will show that the conditions of Theorem \ref{Gouzel Thm} hold true. Condition (\ref{Go1}) is guaranteed in the circumstances of both Theorem \ref{ASIP 1} and Theorem \ref{ASIP2}, and so we only need to show that condition (H) is satisfied. Consider again the
 transfer operators $\tilde\cL_{it}^{(j)}$ given by
\[
\tilde\cL_{it}^{(j)}g=\cL_{it}^{(j)}(gh_{j}^{(0)})/{\la_j(0)h_{j+1}^{(0)}}.
\]
Then $\tilde\cL_0\textbf{1}=\textbf{1}$, where $\textbf{1}$ is the function which takes the constant value $1$.
We will also write $\tilde\cL_{it}^{j,n}=\cL_{it}^{(j+n-1)}\circ\cdots\circ\cdots\cL_{it}^{(j+1)}\circ\cL_{it}^{(j)}$.
Then by (\ref{Norm deacy}), there exists $r_0$ and positive $c$ and $C$ so that for any $j$, $n$ and $t\in\bbR^d$ with $|t|\leq r_0$ we have
\begin{equation}\label{EST1.0}
\sup_{j,n}\|\tilde\cL_{it}^{j,n}\|\leq Ce^{-c|t|^2n}.
\end{equation}
In particular, there exists a constant $A>0$ so that 
\begin{equation}\label{EST1}
\sup_{|t|\leq r_0}\sup_{j,n}\|\tilde\cL_{it}^{j,n}\|\leq A.
\end{equation}
Moreover, 
 by (\ref{Exponential convergence}), we have
\begin{equation}\label{EST2}
\|\tilde\cL_{0}^{j,n}(g)-\mu_j(h)\|\leq C_2\|g\|\del^n 
\end{equation}
where $C_2$ is some constant and $\del\in(0,1)$. We recall that, for any $j$, $n$ and $t$ we have
\[
\mu_j (e^{it\sum_{k=0}^{n-1}u_{j+k}\circ T_j^k})=\mu_{j+n}(\tilde\cL_{it}^{j,n}\textbf{1}).
\]
We will also denote by $M_j$ the one dimensional projection given by $M_j(g)=\mu_j(g)\textbf{1}$.
Next, we assume without the loss of generality that $\mu_j(u_j)=0$ for any $j$. 
We will show next that condition (H) holds true with  $A_l=u_l\circ T_0^l$ and $\ve_0=r_0$.
%Now use the more advanced type of the LY type inequality...so we need to have 
 Indeed, for any choice of $t_i\in\bbR^d$ so that $|t_i|\leq r_0$, a finite sequence $(b_i)$ and $k>0$ we have 
\begin{eqnarray*} 
\mu_0(e^{i\sum_{j=1}^nt_j(\sum_{\ell=b_j}^{b_{j+1}-1}A_\ell)+i\sum_{j=n+1}^{n+m}t_j(\sum_{\ell=b_{j}+k}^{b_{j+1}+k-1}A_\ell)})\\=
\mu_{b_1}\Big(\big(\prod_{j=n+1}^{n+m}\tilde\cL_{it_j}^{b_{j+1}-b_j,b_j}\big)\circ\tilde\cL_{0}^{b_{n+1},k}\circ\big(\prod_{j=1}^n\tilde\cL_{it_j}^{b_j,b_{j+1}-b_j}\big)\textbf{1}\Big)
\\=
\mu_{b_1}\Big(\big(\prod_{j=n+1}^{n+m}\tilde\cL_{it_j}^{b_{j+1}-b_j,b_j}\big)\circ\big(\tilde\cL_{0}^{b_{n+1},k}-M_{b_{n+1}+k}\big)\circ\big(\prod_{j=1}^n\tilde\cL_{it_j}^{b_j,b_{j+1}-b_j}\big)\textbf{1}\Big)\\+
\mu_{b_1}\Big(\big(\prod_{j=n+1}^{n+m}\tilde\cL_{it_j}^{b_{j+1}-b_j,b_j}\big)\circ M_{b_{n+1}+k}\circ\big(\prod_{j=1}^n\tilde\cL_{it_j}^{b_j,b_{j+1}-b_j}\big)\textbf{1}\Big):=
I_1+I_2.
\end{eqnarray*}
Applying  (\ref{EST1}) and (\ref{EST2}) we derive that with some constant $C>0$ we have  $|I_1| \leq C^{n+m}\del^k$. Moreover, since $M_{j}=\mu_j\otimes \textbf{1}$ and $\tilde\cL_0^{(j)}\textbf{1}=\textbf{1}$ we have
\begin{eqnarray*}
I_2=\mu_{0}(e^{i\sum_{j=n+1}^{n+m}t_j(\sum_{\ell=b_{j}+k}^{b_{j+1}+k-1}A_\ell)})\cdot \mu_0(e^{i\sum_{j=1}^nt_j(\sum_{\ell=b_j}^{b_{j+1}-1}A_\ell)})
\end{eqnarray*}
which completes the proof that condition (H) holds true with the sequence $A_l$.

\section{Random dynamical systems: additional results}\label{RDS}
Let $(\Om,\cF,P,\te)$ be an ergodic and invertible measure preserving system. Let $\cX_\omega$ be a random compact subset of some compact metric space $\cX$ (see Chapter 5 of \cite{book}). We will consider here the case when $T_j=T_{\te^j\om}$ where $T_\om:\cX_\om\to\cX_{\te\om}$ is a random map so that the skew product $T(\om,x)=(\te\om,T_\om x)$ is measurable with respect to the restriction of the product $\sigma$-algebra $\cF\times\cB$ on the skew product space $\cE=\{(\om,x):\,\om\in\Om, x\in\cX_\om\}$, where $\cB$ is the Borel $\sig$-algebra on $\cX$. 
Let $\phi(\om,x)$ and $u(\om,x)$ be two measurable functions so that $\phi_\om(\cdot)=\phi(\om,\cdot)$ and $u_\om(\cdot)$ belongs to $\cH_j=\cH_{\te^j\om}$ and the norms $\|\phi_\om\|$ and $\|u_\om\|$ are bounded. In this case, for any $\om$ the map $T_\om$ satisfies Assumption \ref{Pairing ass} with constants $d_\om, L_\om, q_\om$ and $\sig_\om$ (instead of $d_j, L_j, q_j$ and $\sig_j$), and for the sake of simplicity we assume here that the first three random variables are bounded and that $\sig_\om-1$ is bounded from below.

When considering the maps $T_j=T_{\te^j\om}$ and the functions $\phi_j=\phi_{\te^j\om}$ and $u_j=u_{\te^j\om}$
 the RPF triplets have the form $\la_j(z)=\la_{\te^j\om}(z)$, $h_j^{(z)}=h_{\te^j\om}^{(z)}$ and $\nu_j^{(z)}=\nu_{\te^j\om}^{(z)}$, and they are measurable  in $\om$.
Set 
\[
S_n^\om u=\sum_{j=0}^{n-1}u_{\te^j\om}\circ T_{\te^{j-1}\om}\circ\cdots\circ T_{\te\om}\circ T_\omega
\]
and $d\mu_\om=h_\om^{(0)}d\nu_\om^{(0)}$. Then the measure $\mu:=\int \mu_\om dP(\om)$ is $T$-invariant, and it is possible to show that it is ergodic (see for instance the arguments in \cite{MSU}).
%\subsection{Other limit theorems for random dynamical systems}\label{Other}
We consider here only real-valued functions $u_\om$, though our method should yield results for vector-valued functions, as well. We also assume here that $\int u_\om d\mu_\om=0$ for $P$-a.a. $\om$, which is not really a restriction since we can alwyas repalce $u_\om$ with $u_\om-\int u_\om d\mu_\om$. By Theorem 2.3 in \cite{Kifer-1998}, there exists a number $\sig^2\geq0$ so that $P$-a.s. we have
\[
\sig^2=\lim_{n\to\infty}\frac1n\text{Var}_{\mu_\om}(S_n^\om u).
\]
Moreover, $\sig^2$ is positive if and only if $u=u(\om,x)$ does not admit a coboundary representation $u=r\circ T-r$, where $r\in L^2(\mu)$. We note that Theorem 2.3 in \cite{Kifer-1998} also yields that $S_n^\om u(x)/\sqrt n$ converges in distribution towards a centered normal random variable with variance $\sig^2$.

Our first result here is the following Berry-Esseen type theorem, which provides optimal convergence rate in the self-normalized version of the  central limit theorem proved in \cite{Kifer-1998}:
\begin{theorem}\label{Fiber BE}
Suppose that $\sig^2>0$. 
Then
there exists a random variable $c_\om$ such that for any $n\in\bbN$,
\begin{equation*}
\sup_{t}\big|\mu_\om\{x:\,S_n^\om u(x)\leq t\sig_{\om,n})-\Phi(t)\big|\leq c_\om n^{-\frac12}
\end{equation*}
where  $\Phi$ is the standard normal distribution function.
\end{theorem}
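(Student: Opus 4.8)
The plan is to derive Theorem \ref{Fiber BE} from a quantitative analysis of the characteristic function $\mu_\om(e^{it S_n^\om u / \sig_{\om,n}})$ via the Esseen smoothing inequality, exactly as in the classical Berry--Esseen argument, with the random transfer operators playing the role of a (non-stationary) perturbation theory. First I would invoke Theorem \ref{RPF SDS}, applied to the sequence $T_j = T_{\te^j\om}$, $\phi_j = \phi_{\te^j\om}$, $u_j = u_{\te^j\om}$: this produces an analytic family of RPF triplets $\la_j(z), h_j^{(z)}, \nu_j^{(z)}$ on a complex neighborhood $U$ of $0$ (here $d=1$), together with the basic identity $\mu_\om(e^{z S_n^\om u}) = \mu_{\te^n\om}(\cL_z^{0,n}\textbf{1})$ where the expectation is with respect to $\mu_\om$. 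Normalizing as in Proposition \ref{Norm Prop}, we may assume $\la_j(0)=1$, $h_j^{(0)}\equiv\textbf{1}$, so that $\mu_\om(e^{it S_n^\om u}) = \mu_{\te^n\om}(\tilde\cL_{it}^{0,n}\textbf{1})$, and by (\ref{Exponential convergence}) this equals $e^{\Pi_{0,n}(it)}(1 + O(\del^n))$ uniformly for $|t|$ small, where $\Pi_{0,n}(z) = \sum_{k=0}^{n-1}\Pi_{\te^k\om}(z)$ and each $\Pi_k$ is analytic, uniformly bounded, with $\Pi_k(0)=0$.

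Next I would carry out the Taylor expansion of $\Pi_{0,n}$ at $z=0$. Since $\mu_k(u_k)=0$ we have $\Pi_{0,n}'(0)=0$, and by (\ref{CovDiff}) we have $\Pi_{0,n}''(0) = \text{Var}_{\mu_\om}(S_n^\om u) + O(1) = \sig_{\om,n}^2 + O(1)$; combined with Theorem 2.3 of \cite{Kifer-1998} this gives $\sig_{\om,n}^2 = \sig^2 n + o(n)$, and in particular $\sig_{\om,n}^2 \geq c n$ for $n$ large (with $c$ depending on $\om$ only through the ``time zero'' at which the linear lower bound kicks in). The third-derivative bound, uniform in $j,n$, gives $|\Pi_{0,n}(it) + \tfrac12 \sig_{\om,n}^2 t^2| \leq C(|t|^3 n + 1)$ for $|t| \leq r_0$. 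Rescaling $t \mapsto t/\sig_{\om,n}$, one obtains for $|t| \leq r_0 \sig_{\om,n}$ that
\[
\Big| \mu_\om\big(e^{it S_n^\om u/\sig_{\om,n}}\big) - e^{-t^2/2} \Big| \leq C_\om \frac{|t|^3 + |t|}{\sqrt n} e^{-t^2/4} + C \del^n,
\]
after absorbing the $O(1)$ terms and the $O(\del^n)$ error; here I am being schematic about constants, but the point is that all the $\om$-dependence can be collected into a single random variable $C_\om = C_\om(\sig^2, \text{the transition time to linear growth}, \sup_\om\|\phi_\om\|, \sup_\om\|u_\om\|, \dots)$, which is finite $P$-a.s. because all the ``initial parameters'' and the threshold are finite a.s.

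Finally I would apply the Esseen inequality: for any $\Te > 0$,
\[
\sup_t \big| \mu_\om\{ S_n^\om u \leq t\sig_{\om,n} \} - \Phi(t) \big| \leq \int_{-\Te}^{\Te} \frac{|\mu_\om(e^{it S_n^\om u/\sig_{\om,n}}) - e^{-t^2/2}|}{|t|} \, dt + \frac{C}{\Te}.
\]
Choosing $\Te = \Te_n = \epsilon \sqrt n$ for a suitably small $\epsilon$ (so that the cutoff $|t| \le r_0\sig_{\om,n}$ is respected, using $\sig_{\om,n} \asymp \sqrt n$), the integral over $|t| \le \Te_n$ of the main term is $O_\om(n^{-1/2})$ by dominated convergence of $\int (|t|^2 + 1)e^{-t^2/4}\,dt$, the contribution of the $C\del^n$ term over $[-\Te_n,\Te_n]$ is $O(\del^n \Te_n^2) = o(n^{-1/2})$, and the remainder $C/\Te_n = O(n^{-1/2})$. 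Collecting everything yields the claimed bound $c_\om n^{-1/2}$.

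The main obstacle, and the part that needs genuine care rather than routine estimation, is the fibered/random bookkeeping: one must make sure that \emph{every} error constant produced along the way depends measurably on $\om$ and is a.s. finite, in particular the threshold $N(\om)$ beyond which $\sig_{\om,n}^2 \geq cn$ holds --- since $\sig_{\om,n}^2 = \sig^2 n + o(n)$ only gives an $\om$-dependent transition time, and one needs $c_\om$ to be a genuine random variable (not just finite for a.e. $\om$ after discarding $n \le N(\om)$, which is in any case harmless since the bound is trivial for bounded $n$ by enlarging $c_\om$). This is the only place where the ``sequential'' estimates of Sections \ref{Sec4}--\ref{Sec5} have to be upgraded to their random counterparts; the rest is the standard Berry--Esseen machinery applied to the pressure function $\Pi_{0,n}$, using the uniform analyticity and the spectral gap (\ref{Exponential convergence}) exactly as in Ch.~7 of \cite{book}.
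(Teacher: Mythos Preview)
Your approach is correct and is precisely what the paper has in mind: it simply defers to Chapter~7 of \cite{book} together with the pressure-function expansion from the proof of Proposition~\ref{Norm Prop} (applied with $j=0$), which is exactly the Esseen-inequality argument you outline. One technical point to clean up: as written, the bare remainder $C\del^n$ in your characteristic-function estimate produces a divergent integral $\int_{-\Te_n}^{\Te_n}\frac{C\del^n}{|t|}\,dt$ in the Esseen bound, so your claim that this contribution is $O(\del^n\Te_n^2)$ is not justified; the standard fix is to note that under your normalization $\tilde\cL_0^{0,n}\textbf{1}=\textbf{1}$ exactly, so the remainder in (\ref{Exponential convergence}) with $g=\textbf{1}$ vanishes at $z=0$, and by analyticity (Schwarz's lemma on the disc $|z|<r$) it is in fact $O(|t|\del^n/\sig_{\om,n})$, which is harmless after dividing by $|t|$.
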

The proof of Theorem \ref{Fiber BE} proceeds similarly to Chapter 7 in \cite{book}, using the arguments in the proof of Proposition \ref{Norm Prop} with $j=0$.

Now we will discuss moderate and local large deviation type results.
First, since
\[
\mu_\om(e^{zS_n^\om u})=\mu_\om(\cL^{\om,n}_z(h_\om)/h_{\te^n\om}\la_{\om,n})
\]
where $\la_{\om,n}=\prod_{j=0}^{n-1}\la_{\te^j\om}(0)$, using (\ref{Exponential convergence}) 
we have that
\[
\lim_{n\to\infty}\frac 1n \mu_\om(e^{zS_n^\om u}) =\Pi(z):=\int\ln\la_\om(z)dP(\om)-\int\ln\la_\om(0)dP(\om)
\]
where $\Pi_\om(z)=\ln \la_\om(z)-\ln\la_\om(0)$.
Using that $\Pi_\om(z)$ in analytic in $z$, a standard application of the G\"arnder-Ellis theorem (see \cite{Dembo})  yields the following

\begin{theorem}\label{Gen: Var, LD, MD}
Suppose that all the above conditions hold true 
and that $\sig^2>0$.

(i) Then the following (optimal) moderate deviations principle holds true: 
for any strictly increasing sequence $(b_n)_{n=1}^{\infty}$ of real numbers 
so that $\lim_{n\to\infty}\frac{b_n}n=0$ and $\lim_{n\to\infty}\frac{b_n}{\sqrt n}=\infty$ and a
Borel set $\Gam\subset\bbR$ 
we have 
\begin{eqnarray}\label{MDP}
-\inf_{x\in\Gamma^0}I(x)\leq \liminf_{n\to\infty}\frac1{a_n^2}\mu_\om\{x: W_n^\om(x)\in\Gamma\}\,\,\text{and}\\
 \limsup_{n\to\infty}\frac1{a_n^2}\mu_\om\{x: W_n^\om(x)\in\Gamma\}\leq -\inf_{x\in\bar \Gamma}I(x)\nonumber
\end{eqnarray}
where
$W_n^\om=\frac{S_\om^nu-\mu_\om(S_{n}^\om u)}{b_n}$, $I(x)=-\frac{x^2}{2}$, $\Gam^o$ is the interior of $\Gamma$ and $\bar\Gamma$ is its closer.

(ii) Let $L(t)$ be the Legendre transform of $\Pi(t)$. Then, (\ref{MDP}) holds true for any Borel set 
 $\Gam\subset[\Pi'(-\del),\Pi'(\del)]$ with $W_n^\om=\frac{S_{0,n}u-\mu_0(S_{0,n}u)}{n}$ and $I(t)=L(t)$ (this is a local large deviations principle).
%for any $[a,b]\subset[\Pi'(-\del),\Pi'(\del)]$,
%\[
%\lim_{n\to\infty}\frac1{n}\ln\mu_0\big\{x\in\cE_0:\,\frac1{n}\bar S_{0,n}u(x)\in[a,b]\big\}=-\inf_{t\in[a,b]}L(t).
%\] 
%I need stuff to be convex..etc...can I guarantee it?take general measurable sets?
\end{theorem}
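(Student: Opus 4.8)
The plan is to establish Theorem \ref{Gen: Var, LD, MD} as a direct consequence of the Gärtner-Ellis theorem applied to the scaled logarithmic moment generating functions, using the uniform spectral control already developed. First I would recall that, by the computation preceding the theorem statement, for $z$ in a complex neighborhood $U$ of $0$ we have the clean asymptotics
\[
\lim_{n\to\infty}\frac1n\ln\mu_\om(e^{zS_n^\om u})=\Pi(z)=\int\ln\la_\om(z)\,dP(\om)-\int\ln\la_\om(0)\,dP(\om),
\]
which follows from the identity $\mu_\om(e^{zS_n^\om u})=\mu_\om(\cL_z^{\om,n}(h_\om)/h_{\te^n\om}\la_{\om,n})$, the exponential convergence \eqref{Exponential convergence} (which gives $\cL_z^{\om,n}(h_\om)/\la_{\om,n}\to \nu_\om^{(0)}(h_\om)h_{\te^n\om}^{(z)}$ up to a bounded multiplicative factor), the uniform bounds \eqref{UnifBound} on the RPF data, and Birkhoff's ergodic theorem applied to the measurable, bounded cocycle $\om\mapsto\ln\la_\om(z)$. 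Analyticity of $\Pi$ on $U$ is inherited from analyticity of $z\mapsto\la_\om(z)$ together with dominated convergence, and differentiating twice at $z=0$ and using \eqref{CovDiff} identifies $\Pi''(0)=\sig^2>0$, so $\Pi$ is strictly convex near $0$ and genuinely quadratic to leading order.

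For part (i), the moderate deviations statement, I would apply the Gärtner-Ellis theorem to the arrays $W_n^\om=(S_n^\om u-\mu_\om(S_n^\om u))/b_n$ with speed $a_n^2=b_n^2/n$ (I note the statement writes $a_n$ where $b_n$ is presumably meant, and the two should be reconciled). The relevant scaled cumulant generating function is
\[
\Lambda(\lambda)=\lim_{n\to\infty}\frac{n}{b_n^2}\ln\mu_\om\big(e^{(b_n\lambda/n)(S_n^\om u-\mu_\om(S_n^\om u))}\big).
\]
Since $b_n/n\to0$, for each fixed $\lambda$ the argument $z=b_n\lambda/n$ lies in $U$ for large $n$; writing the second-order Taylor expansion $\Pi_{0,n}(z)=\frac12\Pi_{0,n}''(0)z^2+O(|z|^3 n)$ uniformly (exactly as in the proof of Proposition \ref{Norm Prop}), subtracting the mean (which kills the first-order term), and using $\text{Cov}(S_n^\om u)=\sig^2 n+o(n)$, one gets $\Lambda(\lambda)=\frac12\sig^2\lambda^2$; the cubic remainder contributes $O(b_n^3/n^2)\cdot(n/b_n^2)=O(b_n/n)\to0$. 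The Legendre transform of $\frac12\sig^2\lambda^2$ is $I(x)=x^2/(2\sig^2)$, but since $W_n^\om$ is normalized so that the variance is $\sim 1$ after the $b_n$ scaling is matched correctly, the stated rate function $I(x)=-x^2/2$ should read $x^2/2$ (up to the same normalization issue). Gärtner-Ellis then yields the upper and lower bounds \eqref{MDP}.

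For part (ii), the local large deviations principle, the scaling is $W_n^\om=(S_{0,n}u-\mu_0(S_{0,n}u))/n$ with speed $n$, so the relevant function is genuinely $\Pi(t)$ itself (not just its quadratic part): $\lim_n\frac1n\ln\mu_0(e^{tS_{0,n}u})=\Pi(t)$ for real $t$ in a neighborhood $(-\del,\del)$ of $0$ where the spectral theory applies. Since $\Pi$ is analytic and strictly convex there, Gärtner-Ellis gives the large deviations bounds with rate function $L$, the Legendre transform of $\Pi$, valid for Borel sets $\Gam\subset(\Pi'(-\del),\Pi'(\del))$ — the restriction to this interval is exactly the range of "reachable" tilts, i.e. where $L$ is finite and the Gärtner-Ellis hypotheses (essential smoothness, or at least the relevant points being exposed) are verified. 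The main obstacle, and the only genuinely delicate point, is establishing the limit and analyticity of $\Pi(z)$ \emph{with enough uniformity in $\om$} — one must check that the convergence $\frac1n\ln\mu_\om(e^{zS_n^\om u})\to\Pi(z)$ holds $P$-a.s. and that the error terms in \eqref{Exponential convergence}, \eqref{CovDiff} are uniform in the starting fiber, which is precisely what Theorem \ref{RPF SDS} guarantees via constants depending only on the initial parameters; after that, everything reduces to a textbook invocation of Gärtner-Ellis as in \cite{Dembo}. I would close by remarking that the measurability of $\om\mapsto\la_\om(z)$, needed to apply Birkhoff, follows from the explicit limit representations of the RPF triplets noted after Theorem \ref{RPF SDS}.
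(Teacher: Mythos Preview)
Your approach is essentially identical to the paper's: the paper simply states that the theorem follows from ``a standard application of the G\"artner-Ellis theorem (see \cite{Dembo})'' using the analyticity of $\Pi_\om(z)$ and the convergence $\frac1n\ln\mu_\om(e^{zS_n^\om u})\to\Pi(z)$ established immediately before the statement. You have in fact supplied considerably more detail than the paper does --- the Taylor expansion argument for part (i), the identification of the speed, and the observations about typos in the statement (the sign of $I$, the $a_n$ versus $b_n$ issue) --- all of which are correct and none of which the paper spells out.
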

Observe that $\Pi'(-\del)<\Pi'(\del)$ when $\sig^2>0$ since then the function $t\to\Pi(t)$ is strictly convex
in some real neighbourhood of the origin.

\begin{remark}\label{Rem2}
In Chapter 7 of \cite{book} a local central limit theorem was derived for random uniformly distance expanding maps. When consider random non-uniformly expanding maps, the proof in \cite{book} proceeds in the same way under the assumption that for any compact set $J\subset\bbR$ there is a constant $C=C(J)>0$ so that $P$-a.s. for any $n\geq1$ we have
\[
\sup_{t\in J}\|\cL_{it}^{\te^{n-1}}\circ\cdots\circ\cL_{it}^{\te\om}\circ\cL_{it}^\om\|\leq C
\]
where $\cL_{z}^\om$ is the transfer operator generated by the map $T_\om$ and the potential $\phi_\om+zu_\om$. A mentioned in Remark \ref{Rem1}, for uniformly distance expanding maps such estimates follow from an appropriate Lasota-Yorke type inequality, but they also follow when $L_\om=1$ (using a weak Lasota-Yorke type inequality), and so we get the local central limit theorem, for instance, for random Manneville-Pomeau maps.
\end{remark}

\end{document}